
%
%
%

\documentclass[12pt,reqno,oneside]{amsart} 
\usepackage[utf8]{inputenc} 


\usepackage{geometry} 
\geometry{a4paper} 
\allowdisplaybreaks
\usepackage{graphicx} 
\DeclareRobustCommand{\lbinom}{\genfrac{[}{]}{0pt}{}}

\usepackage{booktabs} 
\usepackage{array} 
\usepackage{paralist} 
\usepackage{verbatim} 
\usepackage{subfig} 
\usepackage{chngcntr}
\usepackage{amsmath}
\usepackage{amssymb}

\usepackage{fancyhdr} 
\pagestyle{fancy} 
\lhead{}\chead{}\rhead{}
\lfoot{}\cfoot{\thepage}\rfoot{}

\def\um{{\underline m}}
\def\ul{{\underline l}}

\newcommand{\Lnum}[3]{{ #1  ;  #2 \brack  #3 }}

\renewcommand{\l}{\ensuremath{\lambda}}

\renewcommand{\phi}{\varphi}




\newcommand{\h}{\ensuremath{\dot{\mathfrak{h}}}}
\newcommand{\G}{\ensuremath{\mathfrak{g}}}

\newcommand{\MM}{\ensuremath{\mathsf{M}}}
\renewcommand{\H}{\ensuremath{\mathfrak{h}}}
\newcommand{\N}{\ensuremath{\mathfrak{N}}}

\newcommand{\VV}{\mathsf{V}}

\newcommand{\HH}{\mathsf{H}}
\newcommand{\WW}{\mathsf{W}}

\newcommand{\PP}{\mathsf{P}}
\newcommand{\NN}{\mathsf{N}}
\numberwithin{equation}{section}
\newtheorem{theorem}{Theorem}[section]
\newtheorem{proposition}[theorem]{Proposition}
\newtheorem{corollary}[theorem]{Corollary}
\newtheorem{lemma}[theorem]{Lemma}


\title{Quantum affine modules for non-twisted Affine Kac-Moody algebras}
\author{V. Futorny}
\author{J.T. Hartwig}
\author{E.A. Wilson}

\begin{document}

\begin{abstract}
We construct new irreducible weight modules over quantum affine algebras of type I with all weight spaces infinite-dimensional. These modules are obtained by parabolic induction from irreducible modules over the Heisenberg subalgebra.
\end{abstract}
\maketitle

\section*{Introduction}
One of the distinctive features of affine Kac-Moody algebras, in contrast to the finite dimensional semisimple Lie algebra case, is the existence of Borel subalgebras which are not Weyl group conjugate to the standard ones generated by the positive (resp. negative) simple generators (\cite{JK} and \cite{F1}). This leads to a very rich module theory, which includes weight modules all of whose weight spaces are infinite dimensional constructed by Benkart, Bekkert, Futorny and Kashuba, (\cite{BBFK}). In this paper we construct modules for untwisted affine quantum groups based on this construction which also have all their weight spaces infinite dimensional.

The main tool we will use in this construction is the Poincar\'{e} Birkhoff Witt type basis given in terms of the  Drinfeld generators of the quantum affine algebra (\cite{D}, see also \cite{B}). These generators contain a copy of the infinite rank quantized Heisenberg algebra, $G_q$, generated by the imaginary root vectors. We obtain an isomorphism between $G_q$ and an infinite rank Weyl algebra of which it is a trivial deformation. This allows us to apply the classification of Futorny, Grancharov, and Mazorchuk (\cite{FGM}) to obtain an ample supply of irreducible $G_q$ modules. We use parabolic induction on these modules to obtain modules of the full quantum affine algebra. If $\VV$ is an $\mathbb{A}$-admissible irreducible $G_q$-module, then we use the $\mathbb{A}$-form of the induced module to show that it is a deformation of an irreducible module for the affine Kac-Moody algebra (see \S 1.1 for the definition of $\mathbb{A}$). Moreover, as in Lusztig's result (\cite{L}) we see that the weight multiplicities are the same as in the $q=1$ case. In the final section, we apply the result of the previous section to the case of $\varphi$-imaginary Verma modules for $\varphi:\mathbb{N}\to \{\pm\}$ an arbitrary function. By \cite[Proposition 3.4]{BBFK} and its corollary we obtain conditions for all weight spaces of these modules to be infinite dimensional and irreducible. 

\section{Preliminaries}
Throughout this paper, $\mathbb{N}$ denotes the set of non-negative integers. Let $I=\{1,\dots, n\}$ and let $\mathfrak{g}=X_n^{(1)}$ be an untwisted affine type Kac-Moody algebra corresponding to a generalized affine Cartan matrix of type 1: $ A=(a_{ij})_{0\leq i,j\leq n}$.   The loop algebra construction (see \cite{K}) gives:
\begin{equation}
\mathfrak{g}=\left (\mathring{\mathfrak{g}}\otimes \mathbb{C}[t,t^{-1}]\right )\oplus \mathbb{C}c\oplus \mathbb{C}d
\end{equation}
where $\mathring{\mathfrak{g}}$ is a simple finite-dimensional Lie algebra over $\mathbb{C}$ of type $X_n$, $c$ is the central generator and $d$ is the degree derivation. Then $\H = \h \oplus \mathbb Cc
\oplus \mathbb Cd$ is a Cartan subalgebra of $\mathfrak g$, where $\h$ is a Cartan subalgebra of $\mathring{\mathfrak{g}}$.  Let $\mathcal D=(d_0,\dots,d_n)$ be a diagonal matrix with relatively prime integer entries such that the matrix $\mathcal D A$ is symmetric.
 Let $\mathring{\Delta}$ be the root system of $\mathring{\mathfrak{g}}$, and $\mathring{\Delta}^{\pm}$ be the sets of positive and negative roots with respect to some choice of simple roots $\alpha_1, \ldots, \alpha_n$. Then, the sets of positive (resp. negative) roots of $\mathfrak{g}$ are:
\begin{equation*}
\Delta^{\pm}=\{\alpha\pm k\delta\mid \alpha\in\mathring{\Delta}\cup \{0\}, k\in \mathbb{Z}_{>0}\}\cup \mathring{\Delta}^{\pm}.
\end{equation*}

For each $\alpha\in \Delta$ set $\G_\alpha = \{ x \in \G \mid  [h, x] = \alpha(h) x \,
{\text{ for all }} \, h \in \H\}$. Denote by $\mathring{Q}$ the free abelian group generated by $\alpha_1, \ldots, \alpha_n$, a root lattice of $\mathring{\mathfrak{g}}$. Let $Q:=  \mathring{Q}\oplus {\mathbb Z}\delta$ be the root lattice of $\mathfrak g$ and $P = \{ \lambda \in {\mathfrak h}^*\ |\ \lambda(h_i) \in {\mathbb Z}, i \in I, \lambda(d) \in {\mathbb Z}\}$ be the weight lattice, where $h_1, \ldots, h_n$ is a basis of $ \dot{\mathfrak h}$.

Unlike in the finite-dimensional case, $\Delta$ does not consist of a single Weyl group orbit. Instead, $\Delta=\Delta^{re}\cup \Delta^{im}$ where $\Delta^{re}$ is the set of all roots that are conjugate to a simple root $\alpha_i$ and $\Delta^{im}=\Delta\backslash \Delta^{re}$, called the sets of real and imaginary roots respectively. As seen in \cite{K}, for an untwisted affine type Kac-Moody algebra we have $\Delta^{re}=\{\alpha+k\delta\mid \alpha\in \mathring{\Delta}, k\in \mathbb{Z}\}$ and $\Delta^{im}=\{k\delta\mid k\in\mathbb{Z}\backslash\{0\}\}$ where $\delta$ is the indivisible imaginary root.

Consider the partition $\Delta = S \cup -S$ of the root system of $\mathfrak g$
where $S=\{\alpha+k\delta \mid \alpha\in \mathring{\Delta}^+, k\in \mathbb{Z}\} \cup (\Delta^{im})^+$. This is a non-standard partition of the root system
$\Delta$ in the sense that $S$ is not conjugated  to the sets
$\Delta^{\pm}$  by the Weyl group. The classification of closed partitions of the root system for affine Kac-Moody algebras was obtained by Jakobsen and Kac \cite{JK}, and independently by Futorny \cite{F1,F2}.

The {\it quantum affine algebra} $U_q(\mathfrak g)$ is the $\mathbb C(q^{1/2})$-algebra with 1 generated by
$$ 
E_i, \enspace F_i, \enspace K_\alpha,\enspace \gamma^{\pm 1/2}, \enspace D^{\pm 1} \quad i\in I \cup \{0\},\enspace \alpha\in Q,
$$
and defining relations:
\begin{align*}& DD^{-1}=D^{-1}D=K_iK_i^{-1}=K_i^{-1}K_i=\gamma^{1/2}\gamma^{-1/2}=1, \\
&[\gamma^{\pm 1/2},U_q(\mathfrak g)]=[D,K_i^{\pm1}]=[K_i,K_j]=0, \\
&(\gamma^{\pm 1/2})^2=K_\delta^{\pm 1},\\
& E_iF_j-F_jE_i = \delta_{ij}\frac{K_i-K_i^{-1}}{q_i-q_i^{-1}}, \\
& K_\alpha E_iK_\alpha^{-1}=q^{(\alpha|\alpha_i)}E_i, \ \ K_\alpha F_i K_\alpha^{-1} =q^{-(\alpha|\alpha_i)}F_i, \\
& DE_iD^{-1}=q^{\delta_{i,0}} E_i,\quad DF_iD^{-1}=q^{-\delta_{i,0}} F_i, \\
& \sum_{s=0}^{1-a_{ij}}(-1)^sE_i^{(1-a_{ij}-s)} E_jE_i^{(s)}=0= \sum_{s=0}^{1-a_{ij}}(-1)^sF_i^{(1-a_{ij}-s)} F_jF_i^{(s)}, \quad i\neq j.
\end{align*}
where
$$
q_i:=q^{d_i},\quad 
[n]_i= \frac{q^n_i-q^{-n}_i}{q_i-q^{-1}_i},\quad [n]_i!:=\prod_{k=1}^n[k]_i
$$
and 
$K_i=K_{\alpha_i}$, $E_i^{(s)}=E_i/([s]_i!)$ and $F_i^{(s)}=F_i/([s]_i!)$. We also define $${r \brack s}_i:=\frac{[r]_i!}{[s]_i![r-s+1]_i!}.$$

Let $U_q^+=U_q^+(\mathfrak g)$ (resp. $U_q^-=U_q^-(\mathfrak g)$) be the subalgebra of $U_q(\mathfrak g)$ generated by $E_i$ (resp. $F_i$), $i \in I$, and let $U_q^0=U_q^0(\mathfrak g)$ denote the subalgebra generated by
$K_i^{\pm 1}$ ($i \in I$), $\gamma^{\pm 1/2}$, and $D^{\pm 1}$.

We will also use Drinfeld realization of $U_q(\mathfrak g)$ \cite{D}. It can be generated over $\mathbb C (q^{1/2})$ by
$$  
x_{ir}^{\pm},\enspace h_{is}, \enspace K_i^{\pm 1}, \enspace \gamma^{\pm 1/2},D^{\pm 1} \enspace i\in I,r,s\in\mathbb Z, s\neq 0,
$$
subject to the following relations:
\begin{align}
 DD^{-1}&=D^{-1}D=K_iK_i^{-1}=K_i^{-1}K_i=\gamma^{1/2}\gamma^{-1/2}=1,\label{drinfeldfirst} \\
[\gamma^{\pm 1/2},U_q(\mathfrak g)]&=[D,K_i^{\pm 1}]=[K_i,K_j]=[K_i,h_{jk}]=0, \\
Dh_{ir}D^{-1}&=q^rh_{ir},\quad Dx_{ir}^{\pm}D^{-1}=q^rx_{ir}^{\pm},\\
K_ix_{jr}^{\pm}K_i^{-1} &= q_i^{\pm  (\alpha_i|\alpha_j)}x_{jr}^{\pm},   \\  
[h_{ik},h_{jl}]&=\delta_{k,-l} \frac{1}{k}[ka_{ij}]_i\frac{\gamma^k-\gamma^{-k}}{q_j-q_j^{-1}}\label{hs} \\
[h_{ik},x^{\pm}_{jl}]&= \pm \frac{1}{k}[ka_{ij}]_i\gamma^{\mp |k|/2}x^{\pm}_{j,k+l}, \label{axcommutator}  \\
    x^{\pm}_{i,k+1}x^{\pm}_{jl} &- q^{\pm (\alpha_i|\alpha_j)}
x^{\pm}_{jl}x^{\pm}_{i,k+1}\label{Serre}   \\
&= q^{\pm (\alpha_i|\alpha_j)}x^{\pm}_{ik}x^{\pm}_{j,l+1}
    - x^{\pm}_{j,l+1}x^{\pm}_{ik},\notag \\
[x^+_{ik},x^-_{jl}]&=\delta_{ij}
    \frac{1}{q_i-q^{-1}_i}\left( \gamma^{\frac{k-l}{2}}\psi_{i,k+l} -
    \gamma^{\frac{l-k}{2}}\phi_{i,k+l}\right), \label{xcommutator}   \\
\text{where  }
\sum_{k=0}^{\infty}\psi_{ik}z^{k} &= K_i \exp\left(
(q_i-q^{-1}_i)\sum_{l>0}  h_{il}z^{l}\right), \text{ and }\notag\\
\sum_{k=0}^{\infty}
\phi_{i,-k}z^{-k}&= K^{-1}_i \exp\left( - (q_i-q^{-1}_i)\sum_{l>0} 
h_{i,-l}z^{-l}\right).\label{phidef}\\
\text{For }i\neq j,\enspace N:=1-a_{ij} \notag  \\
\text{Sym}_{k_1,k_2,\dots,k_N}&\sum_{r=0}^{N}(-1)^r \genfrac{[}{]}{0pt}{}{N}{r}_i x_{ik_1}^\pm \cdots x_{ik_r}^\pm x_{jl}^\pm x_{ik_{r+1}}^\pm \cdots x_{ik_s}^\pm=0
\end{align}
 (see \cite{B} and \cite{CFM}). Here $\text{Sym}$ denotes symmetrization with respect to  $k_1,\dots, k_n$.  

Beck \cite{B}  has defined an ordering of $\Delta$ and a related PBW-type basis for quantum affine algebras. Let $\rho=(1/2)\sum_{\beta\in \mathring{\Delta}^+}\beta$ and fix a reduced expression $t_{\rho}=r_{i_1}r_{i_2}\cdots r_{i_d}$ for the Weyl group element $t_\rho$ corresponding to translation by $\rho$. For $\mathring{\mathfrak{g}}$ recall the Lusztig automorphisms (\cite{L}) $T_i,0\leq i \leq n$ of $U_q(\mathfrak{g})$. To each root $\alpha\in \Delta^{re}$ one assigns a corresponding root vector $E_\alpha$. Let $(i_k)_{k\in \mathbb{Z}}$ be the sequence of integers such that $i_k= i_{k\pmod{d}}$ and define the following sequence of real roots:
\begin{equation*}
\beta_k=\begin{cases}
r_{i_1}r_{i_2}\cdots r_{i_{k-1}}(\alpha_k), \text{ if }k\geq 0,\\
r_{i_0}r_{i_{-1}}\cdots r_{i_{k+1}}(\alpha_k), \text{ otherwise.}
\end{cases}
\end{equation*}
Then $(\Delta^{re})^+=(\beta_k)_{k\in \mathbb{Z}}$ and we put: 
\begin{equation*}
E_{\beta_k}=
\begin{cases}
T_{i_1}T_{i_2}\cdots T_{i_{k-1}}(F_{i_k})\text{ if }k\geq 0,\\
T_{i_0}^{-1}T_{i_{-1}}^{-1}\cdots T_{i_{k+1}}^{-1}(E_{i_k}), \text{ otherwise}.
\end{cases}
\end{equation*}
The positive imaginary root vectors are defined by the following functional equation:
\begin{equation*}
1+(q_i-q_i^{-1})\sum_{k\geq 0}K_i^{-1}[E_i,E_{-\alpha_i+k\delta}]z^k= \exp\left ( (q_i-q_i^{-1})\sum_{k=1}^{\infty}E_{k\delta}^{(i)}z^k\right ).
\end{equation*}

Recall the standard $\mathbb{C}$-algebra antiautomorphism (see \cite{L}) $\Omega:U_q(\mathfrak{g})\to U_q(\mathfrak{g})$ given by $\Omega(E_i)=F_i,\Omega(F_i)=E_i, \Omega(K)=K^{-1}$ for $K\in U_q^0(\mathfrak{g})$, and $\Omega(q^{1/2})=q^{-1/2}$. For $\beta\in \Delta^+$, $F_{\beta}:=E_{-\beta}$ is defined to be $\Omega(E_\beta)$.

We remark that the vectors $E_{k\delta}^{(i)}$ are related to the Drinfeld generators by $E_{k\delta}^{(i)}=\gamma^{-|k|/2}h_{ik}$ for all $k\in \mathbb{Z}_{\neq 0}$ (see \cite{CFM}). Similarly, the vectors $E_{\pm \alpha_i+k\delta},i\in I, k\in \mathbb{Z}$ are related to the Drinfeld generators $x_{i,k}^\pm$ by multiplication with some $K\in U_q^0$.

We recall:
\begin{theorem}[\cite{B}]
The set of monomials in $E_\alpha,\alpha\in \Delta^+$ (resp. $E_\alpha,\alpha\in \Delta^-$) with respect to any ordering is a basis of $U_q^+$ (resp. $U_q^-)$.
\end{theorem}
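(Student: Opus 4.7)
The plan is to prove the span and linear independence statements separately. Applying the antiautomorphism $\Omega$ reduces the claim for $U_q^-$ to the one for $U_q^+$, so I will treat only the positive part. Spanning will be established by a Levendorskii--Soibelman style commutation analysis inside $U_q^+$, and linear independence will follow by specialization to $q=1$ and the classical PBW theorem for the affine Kac--Moody Lie algebra $\mathfrak g$.

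For spanning I would first prove, for two positive real roots $\beta_k,\beta_l$ with $k<l$ in the ordering determined by the fixed reduced expression of $t_\rho$, a relation of the form
$$E_{\beta_k}E_{\beta_l} - q^{(\beta_k|\beta_l)}E_{\beta_l}E_{\beta_k} = \sum c\, E_{\beta_{j_1}}\cdots E_{\beta_{j_r}},$$
where each summand on the right is an ordered monomial in the root vectors $E_{\beta_j}$ with $k<j<l$. This is obtained by applying a product of Lusztig automorphisms $T_{i_0}\cdots T_{i_m}$ to a shorter instance of the same identity and reducing to a finite-type Levendorskii--Soibelman relation, as in Lusztig's treatment of quantum finite-type algebras. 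When one factor is an imaginary root vector $E_{k\delta}^{(i)}$, the identification $E_{k\delta}^{(i)}=\gamma^{-|k|/2}h_{ik}$ combined with \eqref{axcommutator} expresses $[E_{k\delta}^{(i)},E_{\beta_l}]$ as a scalar multiple of a shifted real root vector, and \eqref{hs} shows that imaginary root vectors attached to distinct imaginary roots quasi-commute. Iterating these three classes of identities reorders an arbitrary monomial while only introducing summands whose total height is strictly smaller, so a double induction on total height and on the number of inversions closes the argument.

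Linear independence I would handle via Lusztig's $\mathbb A$-form $U_{q,\mathbb A}^+ \subset U_q^+$. Both the real root vectors $E_{\beta_k}$, built from the divided powers of the $E_i$ via the braid action, and the imaginary root vectors $E_{k\delta}^{(i)}$, produced by the defining functional equation, lie in $U_{q,\mathbb A}^+$. Under specialization at $q=1$, each $E_{\beta_k}$ maps to a nonzero classical root vector in $\mathfrak g_{\beta_k}$, while each $E_{k\delta}^{(i)}$ maps to the Heisenberg generator $t^k\otimes h_i$. The classical PBW theorem for $\mathfrak n^+ = \bigoplus_{\alpha\in\Delta^+}\mathfrak g_\alpha$ then tells us that the specialized ordered monomials form a basis of $U(\mathfrak n^+)$; combined with the fact that $U_{q,\mathbb A}^+$ is free as an $\mathbb A$-module in each weight space, this precludes any nontrivial $\mathbb C(q^{1/2})$-linear dependence among the quantum monomials.

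The hard part will be the spanning step in the presence of imaginary root vectors. In contrast to the finite-type case, the $E_{k\delta}^{(i)}$ are not produced by the braid group action and are not eigenvectors of the $T_i$, so the Levendorskii--Soibelman reduction must be supplemented by a separate analysis of the Heisenberg subalgebra of $U_q^+$ and of its interaction with the real root vectors, using the functional equation defining the $E_{k\delta}^{(i)}$ together with \eqref{hs} and \eqref{axcommutator}. Once this mixed commutation is under control, the remainder of the argument is bookkeeping, and the triangular decomposition $U_q^+ \simeq U_q^{+,re}\otimes U_q^{+,im}$ suggested by the ordering on $\Delta^+$ drops out as a byproduct.
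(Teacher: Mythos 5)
The paper offers no proof of this statement: it is quoted directly from Beck \cite{B}, so your proposal has to be measured against the argument in the literature rather than anything in the text. Your overall strategy --- spanning via Levendorskii--Soibelman (convexity) relations for the ordering attached to the reduced expression of $t_\rho$, linear independence via an $\mathbb A$-lattice and specialization at $q=1$ against the classical PBW theorem, with $\Omega$ reducing $U_q^-$ to $U_q^+$ --- is the standard route (essentially Beck's braid-group construction combined with the De Concini--Kac specialization trick), and the independence step is sound in principle: $\mathbb A$ is a localization of $\mathbb C[q^{\pm 1/2}]$, so clearing denominators in a torsion-free lattice and specializing does preclude a dependence, \emph{provided} one knows that all the chosen root vectors lie in the lattice and have the expected nonzero classical limits (for the real vectors this is the compatibility of the $T_i$ with the integral form; for the imaginary ones it is a statement about the functional equation, cf.\ \cite[Theorem 4.7]{B}).

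The genuine gaps are in the spanning step. First, your induction measure fails: the LS correction terms are weight-homogeneous, so every monomial on the right-hand side of your displayed relation has weight $\beta_k+\beta_l$ and hence exactly the same total height as the left-hand side; ``strictly smaller total height'' never occurs and the proposed double induction does not terminate. One must instead work within a fixed weight and use a well-founded order on monomials (e.g.\ number of factors together with a lexicographic order relative to the convex order, exploiting that the correction monomials only involve roots strictly between $\beta_l$ and $\beta_k$); this is fixable, but it is precisely the bookkeeping you dismissed. Second, the mixed real--imaginary commutations are underestimated. Relation \eqref{axcommutator} gives a one-term answer only for commutators of $h_{ik}$ with the Drinfeld generators $x^{\pm}_{j,l}$, i.e.\ with root vectors whose finite part is a \emph{simple} root, and even there Beck's vectors $E_{\pm\alpha_j+l\delta}$ differ from $x^{\pm}_{j,l}$ by factors from $U_q^0$, which perturb the commutators; for $E_{\alpha+k\delta}$ with $\alpha$ non-simple (built from the braid operators, not visible in the Drinfeld presentation) the commutator with $E_{k\delta}^{(i)}$ is a genuine linear combination, and commutators involving $\psi_{ir},\phi_{ir}$ produce sums of products as in Lemma \ref{psix2}. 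Controlling these interactions, and showing that the $E_{k\delta}^{(i)}$ defined by the functional equation lie in $U_q^+$ at all, is the substance of Beck's proof; your sketch names this as ``the hard part'' but supplies no argument for it, so as it stands the spanning claim is not established.
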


The total height of $x$ is defined to be $d_0=\sum_{j=1}^k (a_j+a_j')\text{ht}(\gamma_j)$. We will use the following ordering of positive roots:  
\begin{equation}
\beta_0 > \beta_{-1} > \beta_{-2}> \dots >\delta >2\delta>  \dots
>\beta_2 >\beta_1.
\end{equation} 
We also set $-\alpha < -\beta$ if and only if $\beta > \alpha$ for all
positive roots $\alpha, \beta$, providing an ordering
on $\Delta^-$. Beck's total ordering of the positive roots can be divided into three sets:
$$
\{ \alpha + k\delta\ |\ \alpha \in  \mathring{\Delta}^+, k \ge 0\} >
\{k\delta \ |\ k>0\} > \{-\alpha+k\delta\ |\ \alpha \in   \mathring{\Delta}^+, k>0\}.
$$
Similarly, for the negative roots, we have,
$$
\{ -\alpha - k\delta\ |\ \alpha \in \mathring{\Delta}^+, k \ge 0\} <
\{-k\delta \ |\ k>0\} < \{\alpha-k\delta\ |\ \alpha \in   \mathring{\Delta}^+, k>0\}.
$$

For $r\in\mathbb Z$, we define
$$\beta_r^{\pm}=\begin{cases}
\pm\beta_r&{\text{
if}}\ r\leq 0\cr\mp\beta_r&{\text{
if}}\ r> 0.\end{cases}$$
Hence, we have $S=\{\beta_r^+\mid r\in \mathbb Z\}\cup(\Delta^{im})^+$. Set
$$X_{\beta_r^+}=\begin{cases}
 E_{\beta_r}&{\text{
if}}\ r\leq 0\cr-F_{\beta_r}K_{\beta_r}&{\text{
if}}\ r\geq 1,\end{cases}\ \ \ X_{\beta_r^-}=\begin{cases}
 F_{\beta_r}&{\text{
if}}\ r\leq 0\cr-K_{\beta_r}^{-1}E_{\beta_r}&{\text{
if}}\ r\geq 1.
\end{cases}$$
We have the following result which gives a PBW basis associated with a non-standard partition of the root system.

\begin{theorem}[\cite{CFM}]\label{thm-basis}
Given $\um:\mathbb Z\ni r\mapsto m_r\in\mathbb N$ such that $\#\{r\in\mathbb Z|m_r\neq 0\}<\infty$ define
$$X^-(\um)=\prod_{r\in\mathbb Z}X_{\beta_r^-}^{m_r},\ \ X^+(\um)=\prod_{r\in\mathbb Z}X_{\beta_r^+}^{m_r}$$
where one chooses a fixed ordering for the products.

Given $\ul: \mathbb N \times I \to\mathbb N$ such that $\#\{(r,i)\in \mathbb N \times I|l_{(r,i)}\neq 0\}<\infty$ define 
$$
E^{{\text{
im}}}(\ul)=\prod_{(r,i)\in  \mathbb N \times I}E_{(r\delta,i)}^{l_{(r,i)}},\ \ 
F^{{\text{im}}}(\ul)=\Omega(E^{{\text{im}}}(\ul)),
$$
where $E_{(r,i)}=E^{(i)}_{r\delta}$.
Then the set 
\begin{equation}\label{imaginarypbw}
\{X^-(\um)F^{{\text{
im}}}(\ul)K_{\alpha}D^r\gamma^{s/2}E^{{\text{im}}}(\ul^{\prime})X^+(\um^{\prime})\},\quad r,s\in\mathbb Z,\quad \alpha\in \mathring Q
\end{equation}
is a basis of $U_q(\mathfrak g)$. 
\end{theorem}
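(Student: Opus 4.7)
The plan is to reduce the claim to Beck's standard PBW theorem (Theorem 1.1) via a rearrangement argument. Beck's theorem already provides a basis of $U_q^+$ (resp.\ $U_q^-$) using ordered monomials in $E_\alpha$ for $\alpha \in \Delta^+$ (resp.\ $\Delta^-$) with respect to the ordering displayed above. Combined with the triangular decomposition $U_q(\mathfrak{g}) = U_q^-\cdot U_q^0 \cdot U_q^+$, this yields the ``standard'' PBW basis. The task is to show that the alternative partition $S\cup(-S)$, which moves the real roots $\beta_r$ with $r\ge 1$ from the positive to the negative side (and vice versa for $-\beta_r$), still gives a basis with imaginary root vectors in the middle.

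First, I would record the dictionary between the new generators $X_{\beta_r^{\pm}}$ and the Beck generators. For $r\le 0$ we have exactly $X_{\beta_r^+}=E_{\beta_r}$ and $X_{\beta_r^-}=F_{\beta_r}$. For $r\ge 1$ we have $X_{\beta_r^+}=-F_{\beta_r}K_{\beta_r}$ and $X_{\beta_r^-}=-K_{\beta_r}^{-1}E_{\beta_r}$, i.e.\ a Beck root vector multiplied by a Cartan element. The imaginary generators $E_{(r,i)},F_{(r,i)}$ on the two sides of the Cartan part coincide with the ones occurring in the standard PBW basis, since $E^{(i)}_{r\delta}=\gamma^{|r|/2}h_{ir}$ up to units and the imaginary relations are central with respect to the real root part only up to Cartan twists.

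Second, I would establish spanning. Starting from an arbitrary monomial in Beck's standard basis
$$\Bigl(\prod_{r\in\mathbb{Z}} E_{\beta_r}^{a_r}\Bigr)\cdot E^{\text{im}}(\ul')\cdot K_\alpha D^r\gamma^{s/2}\cdot F^{\text{im}}(\ul)\cdot \Bigl(\prod_{r\in\mathbb{Z}}F_{\beta_r}^{b_r}\Bigr),$$
one moves each $E_{\beta_r}^{a_r}$ with $r\ge 1$ to the right-hand side of the Cartan part as a power of $X_{\beta_r^-}=-K_{\beta_r}^{-1}E_{\beta_r}$, and each $F_{\beta_r}^{b_r}$ with $r\ge 1$ to the left-hand side as a power of $X_{\beta_r^+}=-F_{\beta_r}K_{\beta_r}$. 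The commutation relations $K_\alpha E_\beta K_\alpha^{-1}=q^{(\alpha|\beta)}E_\beta$ (and their analogues for $F_\beta$ and for $h_{i,k}$) move the accumulated Cartan factors past all the intervening generators at the cost of a nonzero scalar, without producing any new terms. The resulting expression fits exactly the form~(\ref{imaginarypbw}) after absorbing the Cartan factors into a single $K_{\alpha'}$. Since Beck's basis spans $U_q(\mathfrak{g})$, so does the new set.

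Finally, linear independence comes for free from a weight-space count. Each element of (\ref{imaginarypbw}) has a well-defined weight under the adjoint action of $U_q^0$ and the eigenvalue of $D$, and the natural bijection described above between the two sets preserves weight spaces up to the change of Cartan index $\alpha\mapsto \alpha'$. Hence the number of new basis vectors in each weight space equals the corresponding number in Beck's standard basis, which is known by Theorem 1.1 to be the correct dimension. The main obstacle is the combinatorial bookkeeping of the Cartan factors: one must verify carefully that moving the $K_{\beta_r}^{\pm 1}$ contributions past the imaginary part $E^{\text{im}}(\ul'),F^{\text{im}}(\ul)$ yields only scalar multiples (and not correction terms), which uses the fact that $K_{\beta_r}$ commutes with the $h_{j,k}$ and hence with the imaginary root vectors.
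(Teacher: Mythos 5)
Your spanning step contains a genuine gap: converting a Beck monomial into one of the form \eqref{imaginarypbw} is not accomplished by shuffling Cartan factors. The real root vectors $E_{\beta_r}$ with $r\geq 1$ (of weight $-\alpha+k\delta$, $\alpha\in\mathring{\Delta}^+$) must themselves be transported from the positive side of the monomial to the far left, across $E^{\text{im}}(\ul')$, across the Cartan part, across $F^{\text{im}}(\ul)$ and across opposite real root vectors, and symmetrically for the $F_{\beta_r}$ with $r\geq 1$; only the attached factors $K_{\beta_r}^{\pm 1}$ commute with everything up to nonzero scalars. The root vectors themselves do not: relations \eqref{axcommutator} and \eqref{xcommutator}, and Lemma~\ref{psix2}, show that each exchange of an imaginary vector with an $x^{\pm}_{jl}$, or of an $x^+$ with an $x^-$, produces genuinely new terms (degree-shifted root vectors, $\psi$- and $\phi$-contributions), so your claim that the rearrangement yields ``only scalar multiples and not correction terms'' is false; likewise your assertion that the imaginary vectors are central with respect to the real root part ``up to Cartan twists'' contradicts \eqref{axcommutator}. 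The actual argument (this theorem is not proved in the present paper but imported from \cite{CFM}) controls these correction terms by an induction on a filtration (by total height, respectively number of factors), using the convexity of Beck's ordering and Levendorskii--Soibelman-type relations to guarantee that the corrections are of strictly lower filtration degree; your proposal omits exactly this mechanism.

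The linear independence argument also does not go through as stated. The $Q$-graded components of $U_q(\mathfrak g)$ are infinite dimensional (the zero component already contains all of $U_q^0$, all imaginary vectors, all products $E_\alpha F_\alpha$, \dots), so matching the cardinalities of two spanning sets weight space by weight space proves nothing about independence; that counting principle is only valid for finite-dimensional graded pieces. What is needed instead is that, with respect to the filtration above, the transition matrix from the monomials \eqref{imaginarypbw} to Beck's PBW basis is triangular with entries in $U_q^0$ acting by invertible scalars on the leading terms, i.e.\ each proposed monomial equals a unit multiple of a Beck basis monomial plus terms of strictly smaller filtration degree -- which is again the same straightening analysis your sketch assumes away.
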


A \emph{weight $\mathfrak g$-module} $\VV$ 
with respect to $\H$ has a decomposition $\VV = \bigoplus_{\l \in \H^*} \VV_\l$, where
$\VV_\l = \{v \in \VV \mid hv = \l(h)v$ for all $h \in \H\}$,
and we say that the set of weights of $\VV$  is the \emph{support} of $\VV$
and write  $\textrm{supp}(\VV)=\{\lambda\in \H^* \mid  \VV_{\lambda}\neq 0\}$. Similarly, a \emph{weight $U_q(\mathfrak g)$-module} $\VV$ 
 has a decomposition $\VV = \bigoplus_{\l \in P} \VV_\l$, where $\VV_\l = \{v \in \VV \mid K_iv = q^{\l(h_i)}v, D^{\pm 1}v = q^{\pm\l(d)}v\}$.

\subsection{A-forms}
Set $\mathbb A= \mathbb C[q^{1/2},q^{-1/2}, \frac{1}{[n]_{q_i}}, i\in I, n>1]$.   Following \cite{CFM} we define the algebra $U_{\mathbb A}=U_{\mathbb A}(\mathfrak g)$ to be the $\mathbb A$-subalgebra
of $U_q(\mathfrak g)$ with 1 generated by the elements 
$$  
x_{ir}^{\pm 1},\ h_{is}, \ K_i^{\pm 1}, \ \gamma^{\pm 1/2}, D^{\pm 1}, \Lnum{K_i}{s}{r},\Lnum{D}{s}{r},\Lnum{\gamma}{s}{1}_i, \Lnum{\gamma\psi_i}{k,l}{1}
$$
for each $i \in I$, $s \in \mathbb Z$ and $n \in  \mathbb Z_+$, where following \cite{L},
we define the {\it Lusztig elements} in $U_q(\mathfrak g)$:
\begin{align}
\Lnum{\gamma}{s}{1}_i&= \frac{\gamma^{s}-\gamma^{-s}}{q_i-q^{-1}_i},\\
\Lnum{\gamma\psi_i}{k,l}{1}&= \frac{\gamma^{\frac{k-l}{2}}\psi_{i,k+l} -
    \gamma^{\frac{l-k}{2}}\phi_{i,k+l}}{q_i-q^{-1}_i} \\ 
\Lnum{K_i}{s}{r} &=
\prod_{j=1}^r
\frac{K_iq_i^{s-j+1} - K_i^{-1}q_i^{-(s-j+1)}}{q_i^j-q_i^{-j}}, \quad \text{and}\\
\Lnum{D}{s}{r} &= \prod_{j=1}^r
\frac{Dq_0^{s-j+1} - D^{-1}q_0^{-(s-j+1)}}{q_0^j-q_0^{-j}}
\end{align}
where $q_0=q^{d_0}$. 
  Let $U_{\mathbb A}^+$ (resp. $U_{\mathbb A}^-$) denote the subalgebra of
$U_{\mathbb A}$ generated by the $x_{ik}^+$, $k\in\mathbb Z$,  $i \in I$ (resp. $x_{ik}^-$,  $k\in\mathbb Z$,  $i \in I$). Also denote by $U_{\mathbb A}^0$ 
the subalgebra of $U_{\mathbb A}$ generated
by the elements  $h_{il}$,  $l\in\mathbb N\backslash\{0\}$, $1\leq i\leq n$, $\gamma^{\pm 1/2}, K_i^{\pm 1}, \Lnum{K_i}{s}{r}$,
$ D^{\pm 1}, \Lnum{D}{s}{r}$,
$\Lnum{\gamma}{s}{1}_i$ and $ \Lnum{\gamma\psi_i}{k,-k}{1}$.

We recall:
\begin{lemma}  \cite[Proposition 3.10, Lemma 3.15]{B}\label{psix2} Set $a=q_i^2\gamma^{1/2}$, $b=q_i^2\gamma^{-1/2}$, $c=-q_i^{a_{ij}}\gamma^{1/2}$, $d=-q_i^{a_{ij}}\gamma^{-1/2}$ for $i\neq j$, $r>0$, $m\in\mathbb Z$.  Then
\begin{align*}
[\psi_{ir},x_{im}^-]&=-\gamma^{1/2}[2]_i\left(\sum_{k=1}^{r-1}a^{k-1}(q_i-q_i^{-1})\psi_{i,r-k}x_{i,m+k}^-+a^{r-1}x_{i,m+r}^-\right)  \\
[\psi_{ir},x_{im}^+]&=\gamma^{-1/2}[2]_i\left(\sum_{k=1}^{r-1}b^{k-1}(q_i-q_i^{-1})x_{i,m+k}^+\psi_{i,r-k}+b^{r-1}x_{i,m+r}^+\right)  \\ 
[\psi_{ir},x_{jm}^-]&=\gamma^{1/2}[a_{ij}]_i\left(\sum_{k=1}^{r-1}c^{k-1}(q_i-q_i^{-1})\psi_{i,r-k}x_{j,m+k}^-+a^{r-1}x_{j,m+r}^-\right)   \\
[\psi_{ir},x_{jm}^+]&=-\gamma^{-1/2}[a_{ij}]_i\left(\sum_{k=1}^{r-1}d^{k-1}(q_i-q_i^{-1})x_{j,m+k}^+\psi_{i,r-k}+d^{r-1}x_{j,m+r}^+\right) .
\end{align*}
\end{lemma}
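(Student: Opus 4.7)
The plan is to derive all four commutation formulas simultaneously by computing the conjugation action of the generating series $\psi_i(z)=\sum_{k\ge 0}\psi_{ik}z^k$ on the currents $x^\pm_j(w)=\sum_m x^\pm_{jm}w^m$, and then extracting coefficients of $z^rw^m$.

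Writing $\psi_i(z)=K_i\exp\bigl((q_i-q_i^{-1})H_i(z)\bigr)$ with $H_i(z)=\sum_{l>0}h_{il}z^l$, relation \eqref{axcommutator} gives, term by term,
$$[H_i(z),x^\pm_j(w)]=\pm\Bigl(\sum_{l\ge 1}\tfrac{1}{l}[la_{ij}]_i\gamma^{\mp l/2}(z/w)^l\Bigr)x^\pm_j(w),$$
so $\operatorname{ad}(H_i(z))$ acts on the current $x^\pm_j(w)$ by a central formal series in $z/w$. Iterating and exponentiating, and summing via the closed form $\sum_{l\ge 1}\tfrac{x^l}{l}(q_i^{la_{ij}}-q_i^{-la_{ij}})=\log\bigl((1-q_i^{-a_{ij}}x)/(1-q_i^{a_{ij}}x)\bigr)$, collapses the exponential to a rational function. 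Combined with the conjugation rule $K_ix^\pm_j(w)K_i^{-1}=q_i^{\pm(\alpha_i|\alpha_j)}x^\pm_j(w)$, one obtains an identity of the form
$$\psi_i(z)\,x^\pm_j(w)=g^\pm_{ij}(z,w)\,x^\pm_j(w)\,\psi_i(z),$$
where $g^\pm_{ij}(z,w)$ is rational in $z/w$ with a unique pole at $1-\alpha(z/w)=0$; the four choices $\alpha\in\{a,b,c,d\}$ correspond precisely to the four cases of the lemma, with the specialization $a_{ii}=2$ producing $[2]_i$ and the constants $a,b$, and general $a_{ij}$ (for $i\neq j$) producing $[a_{ij}]_i$ and $c,d$.

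To finish, I would rewrite the identity as $[\psi_i(z),x^\pm_j(w)]=(1-g^\pm_{ij}(z,w)^{-1})\psi_i(z)x^\pm_j(w)$, expand $1-g^\pm_{ij}(z,w)^{-1}$ as a geometric series $\sum_{k\ge 1}\alpha^{k-1}c_k(z/w)^k$ with $c_k$ depending on the case, and extract the coefficient of $z^rw^m$ on both sides. The generic index $k$ with $1\le k\le r-1$ contributes the summand $\alpha^{k-1}(q_i-q_i^{-1})\psi_{i,r-k}x^\pm_{j,m+k}$ multiplied by the overall prefactor $\mp\gamma^{\pm 1/2}[a_{ij}]_i$, while the boundary index $k=r$, where the remaining $\psi_{i0}$-factor is absorbed into $x^\pm_{j,m+r}$ via the conjugation rule for $K_i$, produces the final summand $\alpha^{r-1}x^\pm_{i,m+r}$ without an extra $(q_i-q_i^{-1})$ factor.

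The main obstacle is the bookkeeping of signs, powers of $\gamma$, and expansion directions of the rational function $g^\pm_{ij}(z,w)$; once these are fixed the computation is uniform across all four cases. A sanity check at $r=1$, where the sum over $k$ is empty, reduces directly to \eqref{axcommutator} applied to $\psi_{i1}=K_i(q_i-q_i^{-1})h_{i1}$. An equivalent purely algebraic route is induction on $r$, using the recursion $r\psi_{ir}=(q_i-q_i^{-1})\sum_{l=1}^{r}l\,h_{il}\psi_{i,r-l}$ obtained by logarithmically differentiating the defining series for $\psi_i(z)$; the inductive step then reduces to the $r=1$ computation applied one order at a time.
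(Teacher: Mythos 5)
First, note that the paper itself gives no argument for this lemma: it is recalled verbatim from Beck \cite{B} (via \cite{CFM}), so there is no in-paper proof to compare against, and your proposal has to stand on its own. Your overall strategy --- conjugate the currents $x^\pm_j(w)$ by $\psi_i(z)=K_i\exp\bigl((q_i-q_i^{-1})\sum_{l>0}h_{il}z^l\bigr)$, use \eqref{axcommutator} to see that $\operatorname{ad}(H_i(z))$ acts on $x^\pm_j(w)$ by a central series in $z/w$, collapse the exponential to a rational function, and extract coefficients --- is indeed the standard route to identities of this shape, and the computation of $[H_i(z),x^\pm_j(w)]$ and the logarithm step are fine.

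The gap is in the final expansion step, and it is not mere bookkeeping. Because $\psi_i(z)$ carries the prefactor $K_i=\psi_{i0}$, the conjugation function satisfies $g^\pm_{ij}(0)=q_i^{\pm a_{ij}}\neq 1$, so $1-g^\pm_{ij}(z,w)^{-1}$ has a \emph{nonzero constant term}: coefficient extraction produces, besides the terms $k=1,\dots,r$, a $k=0$ term proportional to $\psi_{ir}x^\pm_{jm}$, and the $k=r$ term comes out as $\alpha^{r-1}(q_i-q_i^{-1})\psi_{i0}x^\pm_{j,m+r}=\alpha^{r-1}(q_i-q_i^{-1})K_ix^\pm_{j,m+r}$. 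You cannot ``absorb'' the surviving $K_i$ into $x^\pm_{j,m+r}$ --- conjugation only trades $K_ix^\pm$ for a scalar times $x^\pm K_i$ --- and the factor $(q_i-q_i^{-1})$ does not drop out by itself. Your own sanity check exposes this: with the paper's conventions $\psi_{i1}=(q_i-q_i^{-1})K_ih_{i1}$, and a direct computation gives $[\psi_{i1},x^-_{im}]=(q_i^{-2}-1)x^-_{im}\psi_{i1}-(q_i-q_i^{-1})[2]_i\gamma^{1/2}K_ix^-_{i,m+1}$, which is not the stated $-\gamma^{1/2}[2]_ix^-_{i,m+1}$; so the $r=1$ case does not ``reduce directly to \eqref{axcommutator}''. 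To close the argument you must either carry the $K_i$-factors and the $k=0$ term through and then reconcile the outcome with Beck's normalization of $\psi_{ir}$ (this is exactly where the stated coefficients come from, including the absent $(q_i-q_i^{-1})$ in the boundary term and the probable misprint $a^{r-1}$ instead of $c^{r-1}$ in the third formula), or run the expansion for $K_i^{-1}\psi_i(z)$, whose constant term does vanish --- but then the boundary term acquires the factor $(q_i-q_i^{-1})$, again differing from the statement. As written, the sketch does not reproduce the formulas of the lemma; the same caveat applies to the proposed induction via $r\psi_{ir}=(q_i-q_i^{-1})\sum_{l=1}^{r}l\,h_{il}\psi_{i,r-l}$, whose base case is the very computation above.
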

A similar set of  formulas can be obtained for $\phi_{ir}$ by applying  the anti-automorphism $\Omega$.

The next result follows from direct calculations, see \cite{CFM}, Proposition 3.4.2.
\begin{proposition} \label{Arelns}
The following commutation relations hold between the generators
of $U_{\mathbb A}$.  For $k\in\mathbb Z$, $l\in\mathbb Z\backslash\{0\}$, $1\leq i,j\leq n$,
\begin{align*}
x_{ik}^+ \Lnum{K_j}{s}{r} & = \Lnum{K_j}{s-a_{ji}}{r}x_{ik}^+, \\
x_{ik}^+ \Lnum{D}{s}{r} &= \Lnum{D}{s-k}{r}x_{ik}^+, \\
\Lnum{K_j}{s}{r}x_{ik}^- &= x_{ik}^- \Lnum{K_j}{s-a_{ji}}{r}, \\
\Lnum{D}{s}{r}x_{ik}^- &= x_{ik}^- \Lnum{D}{s-k}{r},\\
\Lnum{D}{s}{r}h_{ik} &= h_{ik} \Lnum{D}{s+k}{r},\\
[\gamma^{\pm 1/2},U_{\mathbb A} ]&=[D,K_i^{\pm 1}]=[K_i,K_j]=[K_i,h_{jk}]=0, \\
Dh_{ik} &=q^kh_{ik}D,\quad Dx_{ik}^{\pm} =q^kx_{ik}^{\pm}D,\\
K_ix_{jr}^{\pm} &= q_i^{\pm  (\alpha_i|\alpha_j)}x_{jr}^{\pm}K_i,   \\  
[h_{ik},h_{jl}]&=\delta_{k,-l} \frac{1}{k}[ka_{ij}]_i\Lnum{\gamma}{k}{1}_i \\
[h_{ik},x^{\pm}_{jl}]&= \pm \frac{1}{k}[ka_{ij}]_i\gamma^{\mp |k|/2}x^{\pm}_{j,k+l},  \\
[x^+_{ik},x^-_{jl}]&=\delta_{ij}
    \Lnum{\gamma\psi_i}{k,l}{1}.
\end{align*}

\end{proposition}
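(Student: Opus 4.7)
The plan is to verify each listed identity directly from the Drinfeld presentation \eqref{drinfeldfirst}--\eqref{xcommutator} together with the definitions of the Lusztig elements. The relations organize into two groups.

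The first group consists of those identities that are literal reformulations of Drinfeld relations: the triviality $DD^{-1}=\cdots=1$; the centrality $[\gamma^{\pm 1/2},U_{\mathbb{A}}]=0$; the commutations $[D,K_i^{\pm 1}]=[K_i,K_j]=[K_i,h_{jk}]=0$; the $D$-gradings $Dh_{ik}=q^kh_{ik}D$ and $Dx_{ik}^\pm=q^kx_{ik}^\pm D$; the $K$-conjugation $K_ix_{jr}^\pm = q_i^{\pm(\alpha_i|\alpha_j)}x_{jr}^\pm K_i$; and the $[h_{ik},x_{jl}^\pm]$ formula. The two bracket identities
\[
[h_{ik},h_{jl}]=\delta_{k,-l}\tfrac{1}{k}[ka_{ij}]_i\Lnum{\gamma}{k}{1}_i,\qquad [x^+_{ik},x^-_{jl}]=\delta_{ij}\Lnum{\gamma\psi_i}{k,l}{1}
\]
are then obtained by substituting the definitions of $\Lnum{\gamma}{k}{1}_i$ and $\Lnum{\gamma\psi_i}{k,l}{1}$ into \eqref{hs} and \eqref{xcommutator}. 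No further work is needed for this group.

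The second group consists of the four identities commuting $x_{ik}^\pm$ or $h_{ik}$ through $\Lnum{K_j}{s}{r}$ or $\Lnum{D}{s}{r}$. The key observation is that in the product expansion $\Lnum{K_j}{s}{r} = \prod_{p=1}^r (K_jq_j^{s-p+1} - K_j^{-1}q_j^{-(s-p+1)})/(q_j^p-q_j^{-p})$ the $r$ factors are polynomials in the single element $K_j^{\pm 1}$, so they mutually commute, and their scalar denominators are inert. Pushing $x_{ik}^-$ through one factor via the Drinfeld commutation $K_jx_{ik}^- = q_j^{-(\alpha_j|\alpha_i)}x_{ik}^-K_j$ uniformly shifts the exponent $s-p+1$ by the same integer in every factor; reconciled with the paper's normalization this net shift is $-a_{ji}$, yielding $\Lnum{K_j}{s}{r}x_{ik}^- = x_{ik}^-\Lnum{K_j}{s-a_{ji}}{r}$. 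The identity for $x_{ik}^+$ is obtained symmetrically, and the two $\Lnum{D}{s}{r}$ identities are proved in the same way using $Dx_{ik}^\pm=q^kx_{ik}^\pm D$ and $Dh_{ik}=q^kh_{ik}D$, producing the shifts $s\mapsto s-k$ and $s\mapsto s+k$ respectively.

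The verification is routine; the only point requiring attention is the scalar bookkeeping that matches the exponent shift coming from $q_j^{\pm(\alpha_j|\alpha_i)}$ with the Cartan entry $a_{ji}$ appearing in the statement. That every computation remains inside $U_{\mathbb{A}}$ rather than merely $U_q(\mathfrak{g})$ is automatic: the denominators $q_i-q_i^{-1}$ and $[ka_{ij}]_i$ occurring on the right-hand sides of the Drinfeld relations are exactly those absorbed into the definitions of the Lusztig bracket symbols, so no inverses outside $\mathbb{A}$ are introduced at any stage.
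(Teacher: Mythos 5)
Your proposal is correct and follows exactly the route the paper intends: the paper disposes of Proposition \ref{Arelns} by appeal to direct calculation (citing Proposition 3.4.2 of \cite{CFM}), and your factor-by-factor conjugation of $\Lnum{K_j}{s}{r}$ and $\Lnum{D}{s}{r}$ through the Drinfeld relations, together with the literal substitution of $\Lnum{\gamma}{k}{1}_i$ and $\Lnum{\gamma\psi_i}{k,l}{1}$ into \eqref{hs} and \eqref{xcommutator}, is precisely that calculation. The only point of care is the one you already flag: the conjugation relation must be read as $K_jx_{ik}^{\pm}K_j^{-1}=q^{\pm(\alpha_j|\alpha_i)}x_{ik}^{\pm}=q_j^{\pm a_{ji}}x_{ik}^{\pm}$ so that the exponent shift in each factor is exactly $a_{ji}$.
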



Applying Proposition~\ref{Arelns} and following the proof of Corollary 3.4.3 in \cite{CFM} we obtain:
\begin{corollary} \label{Arelnscor}The algebra $U_{\mathbb A}$
inherits the  triangular decomposition of $U_q(\mathfrak g)$: 
$U_{\mathbb A}=U_{\mathbb A}^- U_{\mathbb A}^0 U_{\mathbb A}^+$.  
In particular,
any element $u $ of $U_{\mathbb A}$ can be written as a $\mathbb A$-linear combination of
monomials of the form $u^-u^0u^+$ where $u^{\pm} \in U_{\mathbb A}^{\pm}$
and $u^0 \in U_{\mathbb A}^0$.
\end{corollary}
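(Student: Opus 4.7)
The inclusion $U_{\mathbb A}^- U_{\mathbb A}^0 U_{\mathbb A}^+\subseteq U_{\mathbb A}$ is immediate from the definitions, so the content of the statement is the reverse inclusion. The plan is to show, by induction on the length of a product of generators of $U_{\mathbb A}$, that every such product can be rewritten as an $\mathbb A$-linear combination of monomials of the form $u^-u^0u^+$ with $u^\pm\in U_{\mathbb A}^\pm$ and $u^0\in U_{\mathbb A}^0$. Since $U_{\mathbb A}$ is generated, as an $\mathbb A$-algebra with 1, by the elements listed before Proposition~\ref{Arelns}, it suffices to verify that every pair of the form (generator of type $0$)\,(generator of type $-$), (generator of type $+$)\,(generator of type $0$), and (generator of type $+$)\,(generator of type $-$) can be re-expressed in the desired triangular order modulo an $\mathbb A$-linear combination of strictly shorter triangularly ordered monomials.

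First I would move the purely Cartan-type generators $K_i^{\pm1},\gamma^{\pm1/2},D^{\pm1},\Lnum{K_i}{s}{r},\Lnum{D}{s}{r},\Lnum{\gamma}{s}{1}_i$ past the $x_{ik}^{\pm}$ using the identities of Proposition~\ref{Arelns}; in each of these cases the commutation rule is of the form ``generator on the left equals modified generator on the right times $x_{ik}^\pm$'', and the modification involves only a shift of the parameter $s$, so the outcome stays inside $U_{\mathbb A}$. Next, the $h_{il}$ relations with $x_{jm}^{\pm}$ produce a coefficient $\pm\frac{1}{k}[ka_{ij}]_i\gamma^{\mp|k|/2}$, which lies in $\mathbb A$, and a new $x_{j,k+l}^{\pm}$ of the correct sign, hence preserves the triangular form. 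The $[h_{ik},h_{jl}]$ relation evaluates to $\delta_{k,-l}\frac{1}{k}[ka_{ij}]_i\Lnum{\gamma}{k}{1}_i\in U_{\mathbb A}^0$, again by design of the Lusztig element $\Lnum{\gamma}{k}{1}_i$.

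The crucial step — and the place where the definition of $U_{\mathbb A}$ was tailored for this purpose — is the cross-commutator $[x_{ik}^+,x_{jl}^-]=\delta_{ij}\,\Lnum{\gamma\psi_i}{k,l}{1}$. Since $\Lnum{\gamma\psi_i}{k,l}{1}$ was explicitly adjoined to $U_{\mathbb A}^0$, the problematic scalar $\frac{1}{q_i-q_i^{-1}}$ from the Drinfeld realization is absorbed into a $U_{\mathbb A}^0$-generator, and we obtain an element $x_{jl}^-x_{ik}^+ + \delta_{ij}\Lnum{\gamma\psi_i}{k,l}{1}$ in the correct triangular form. Thus, after finitely many applications of these rewriting rules, any word in the generators is converted into an $\mathbb A$-linear combination of monomials $u^-u^0u^+$, which proves $U_{\mathbb A}\subseteq U_{\mathbb A}^- U_{\mathbb A}^0 U_{\mathbb A}^+$.

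The main obstacle is bookkeeping: one must ensure that the rewriting terminates and that no coefficients outside $\mathbb A$ ever appear as a by-product. Termination follows, as in the proof of Corollary~3.4.3 in \cite{CFM}, from a standard induction on a suitable degree of the word (e.g.\ the total degree in the non-Cartan generators together with an inversion count between $+$ and $-$ factors), since each cross-commutator replaces a length-two inversion either by a pure Cartan term (strictly shorter) or by terms that are already in triangular order. The coefficient issue is handled uniformly by Proposition~\ref{Arelns} and Lemma~\ref{psix2}, whose coefficients are all elements of $\mathbb A$. Together these verifications establish $U_{\mathbb A}=U_{\mathbb A}^- U_{\mathbb A}^0 U_{\mathbb A}^+$, completing the proof.
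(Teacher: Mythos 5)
Your proposal is correct and follows essentially the same route as the paper, whose proof consists precisely of applying the commutation relations of Proposition~\ref{Arelns} and the straightening argument of Corollary~3.4.3 in \cite{CFM}, exactly as you do. The only point worth tightening is that for $k+l\neq 0$ the element $\Lnum{\gamma\psi_i}{k,l}{1}$ is not literally among the listed generators of $U_{\mathbb A}^0$ (only $\Lnum{\gamma\psi_i}{k,-k}{1}$ is), so one should add the one-line observation that it still lies in $U_{\mathbb A}^0$ because $\psi_{i,m}/(q_i-q_i^{-1})$ (resp.\ $\phi_{i,-m}/(q_i-q_i^{-1})$) is an $\mathbb A$-coefficient polynomial in $K_i^{\pm 1}$, $\gamma^{\pm 1/2}$ and the imaginary generators $h_{i,\pm l}$.
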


\section{Irreducible weight modules over quantum Heisenberg algebras}

In the Drinfeld presentation (\cite{D}) of the quantized enveloping algebra $U_q(\mathfrak{g})$, the quantum Heisenberg subalgebra $G_q$ of  $U_q(\mathfrak{g})$ is generated over $\mathbb{C}(q^{1/2})$ by $h_{ik}$, $i\in I$, $k\in\mathbb{Z}_{\neq 0}$ and $\gamma^{\pm 1/2}$, 
where $\gamma^{\pm 1/2}$ is central and 
\begin{equation}\label{eq:qheisenberg_rels}
h_{ik}h_{jl}-h_{jl}h_{ik}=\delta_{k+l,0}\cdot 
a_{ij;q}^{k}
\cdot \frac{\gamma^{k}-\gamma^{-k}}{q-q^{-1}},\qquad
\forall i,j\in I, k,l\in\mathbb{Z}_{\neq 0} 
\end{equation}
where
\begin{equation}\label{eq:aijq_def}
a_{ij;q}^k=
\frac{[k\cdot a_{ij}]_{q_i}}{k\cdot [d_j]_{q_j}}.
\end{equation}
%
%
We have
\begin{equation}\label{eq:aij_limit}
\lim_{q\to 1} a_{ij;q}^k=
\frac{\big(\alpha_i|\alpha_j\big)}{d_id_j},
\end{equation}
where $\lim_{q\to 1} a_{ij;q}^k$ means we expand any expression $[n]_{q_i}$ in powers of $q_i$ and then specialize $q$ to 1. By \eqref{eq:aij_limit} and the non-degeneracy of the form $(\cdot |\cdot)$, the matrix $(a_{ij;q}^k)_{i,j\in I}$ is invertible for any $k\in \mathbb{Z}_{\neq 0}$.

Let $(b_{ij;q}^k)_{i,j\in I}$ be the inverse matrix of $(a_{ij;q}^k)_{i,j\in I}$. Make the following change of variables among the negative generators of $G_q$:
\begin{equation}
h_{j,-k}'=\sum_{m=1}^n b_{mj;q}^k h_{m,-k},\quad j\in I, k\in \mathbb{Z}_{>0}.
\end{equation}
Then \eqref{eq:qheisenberg_rels} is equivalent to
($\forall i,j\in I, \forall k,l\in\mathbb{Z}_{>0}$)
\begin{subequations}\label{eq:qheisenberg_newrels}
\begin{align}
h_{ik}h_{j,-l}'-h_{j,-l}'h_{ik} &=\delta_{kl}\delta_{ij}
\cdot \frac{\gamma^k-\gamma^{-k}}{q-q^{-1}}, \\ 
h_{ik}h_{jl}-h_{jl}h_{ik} &=0, \\ 
h_{i,-k}'h_{j,-l}'-h_{j,-l}'h_{i,-k}' &=0.
\end{align}
\end{subequations}

Let $A_{n\cdot \infty}$ be the countably infinite rank Weyl algebra with generators $X_{ik}, \partial_{ik}$, ($i\in I, k\in\mathbb{Z}_{>0}$) and relations
\begin{gather} 
\partial_{ik}X_{ik}-X_{ik}\partial_{ik}=1,\\ 
X_{ik}X_{jl}-X_{jl}X_{ik}=\partial_{ik}X_{jl}-X_{jl}\partial_{ik}=\partial_{ik}\partial_{jl}-\partial_{jl}\partial_{ik}=0,\qquad (i,k)\neq (j,l).
\end{gather}

\begin{proposition} \label{prp:qheisenberg_iso}
For any $\ell\in\mathbb{Z}\setminus\{0\}$, we have an isomorphism of $\mathbb{C}(q^{1/2})$-algebras
\begin{align*}
\psi: G_q/\langle \gamma-q^\ell\rangle &\longrightarrow A_{n\cdot\infty}\\ 
 h_{ik}&\longmapsto [k\ell]_q \cdot \partial_{ik} \\ 
 h_{i,-k}'&\longmapsto X_{ik} 
\end{align*}
for all $i\in I, k\in \mathbb{Z}_{>0}$.
\end{proposition}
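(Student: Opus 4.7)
The plan is to construct $\psi$ in three stages: verify that it is a well-defined algebra homomorphism, establish surjectivity, and then deduce injectivity by comparing PBW bases on both sides.

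First I would check well-definedness by computing the images of the three relations in \eqref{eq:qheisenberg_newrels} after imposing $\gamma = q^\ell$. The key arithmetic fact is that $(q^{k\ell}-q^{-k\ell})/(q-q^{-1}) = [k\ell]_q$, so the mixed commutator specializes to $[h_{ik}, h'_{j,-l}] = \delta_{ij}\delta_{kl}\,[k\ell]_q$, while the pure positive and pure negative commutators remain zero. Under $\psi$ these become
\[
[[k\ell]_q \partial_{ik},\, X_{jl}] = \delta_{ij}\delta_{kl}\,[k\ell]_q, \quad [[k\ell]_q\partial_{ik},\,[l\ell]_q\partial_{jl}]=0, \quad [X_{ik},\,X_{jl}]=0,
\]
all of which hold in $A_{n\cdot\infty}$. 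Surjectivity is then immediate: since $\ell \neq 0$ and $k>0$, each $[k\ell]_q$ is a unit in $\mathbb{C}(q^{1/2})$, so $\partial_{ik} = [k\ell]_q^{-1}\psi(h_{ik})$ and $X_{ik} = \psi(h'_{i,-k})$ both lie in the image.

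For injectivity I would appeal to PBW bases. On the Weyl algebra side, ordered monomials in $\{\partial_{ik}\}\cup\{X_{jl}\}$ form a basis of $A_{n\cdot\infty}$ by the standard argument. On the $G_q$ side, Theorem~\ref{thm-basis} restricted to the imaginary subalgebra yields a basis consisting of ordered products $F^{\text{im}}(\ul)\gamma^{s/2}E^{\text{im}}(\ul')$; using the identification $E_{k\delta}^{(i)} = \gamma^{-|k|/2}h_{ik}$ recalled in the preliminaries, this translates into a basis of $G_q$ of ordered monomials in $\{h_{ik}\mid i\in I, k\in\mathbb{Z}_{\neq 0}\}$ together with $\gamma^{\pm 1/2}$. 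Since the matrix $(a_{ij;q}^k)_{i,j\in I}$ is invertible for every $k\neq 0$ (as noted after \eqref{eq:aij_limit}), the substitution $h_{m,-k}\leftrightarrow h'_{j,-k}$ is a nondegenerate linear change of generators in each negative degree, so ordered monomials in $\{h_{ik}\mid k>0\}\cup\{h'_{j,-l}\mid l>0\}$ (with $\gamma^{\pm 1/2}$) also form a basis of $G_q$. Passing to the quotient $G_q/\langle\gamma - q^\ell\rangle$ produces a PBW basis of that algebra, and $\psi$ carries it to the PBW basis of $A_{n\cdot\infty}$ after rescaling each $h_{ik}$-factor by the nonzero scalar $[k\ell]_q$. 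This gives bijectivity.

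The main obstacle I foresee is the bookkeeping around the PBW basis of $G_q/\langle\gamma-q^\ell\rangle$: one must verify that Theorem~\ref{thm-basis} restricts cleanly to the Heisenberg subalgebra, that the invertible substitution $h_{m,-k}\mapsto h'_{j,-k}$ preserves the basis property in each finite-dimensional negative slab, and that the specialization $\gamma\mapsto q^\ell$ descends this basis to the quotient without collapse. Once that is in place, the remaining content is the relation check and the invertibility of $[k\ell]_q$ in $\mathbb{C}(q^{1/2})$, both of which are routine.
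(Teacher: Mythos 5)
Your proposal is correct, but it reaches injectivity by a different route than the paper. The paper's proof is a two-line argument: after checking via \eqref{eq:qheisenberg_newrels} that $\psi$ is a well-defined surjection, it observes that the \emph{inverse assignments} $\partial_{ik}\mapsto [k\ell]_q^{-1}h_{ik}$, $X_{ik}\mapsto h'_{i,-k}$ also respect the Weyl-algebra relations, giving a surjective homomorphism in the opposite direction; since the two composites fix the generators, both maps are isomorphisms. You instead prove injectivity by comparing PBW bases: the Beck/Cox--Futorny--Misra basis of Theorem~\ref{thm-basis} restricted to the imaginary part, the invertible change of variables $h_{m,-k}\mapsto h'_{j,-k}$, and a descent of the resulting ordered monomials to the quotient $G_q/\langle\gamma-q^\ell\rangle$. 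Your argument works, but it invokes a substantially heavier input (the PBW theorem for $U_q(\mathfrak g)$) than the paper needs, and the step you flag as the main obstacle --- that the monomial basis descends to the quotient ``without collapse'' --- can in fact be short-circuited by your own map: once $\psi$ is known to be a well-defined algebra map, the ordered monomials in $h_{ik}, h'_{j,-l}$ visibly span the quotient, and their images under $\psi$ are nonzero scalar multiples of distinct PBW monomials of $A_{n\cdot\infty}$, hence linearly independent; this simultaneously shows the spanning set is a basis and that $\psi$ is injective, with no independent PBW statement for the quotient required. (Both your write-up and the paper gloss over the role of $\gamma^{1/2}$ in the quotient --- strictly one should kill $\gamma^{1/2}-q^{\ell/2}$ rather than just $\gamma-q^{\ell}$ --- so this is not a defect specific to your argument.) In short: the paper buys injectivity cheaply by exhibiting an explicit inverse, while your approach buys, at greater cost, the extra information that the ordered monomials form a basis of the quotient; either is acceptable.
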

\begin{proof}
Using \eqref{eq:qheisenberg_newrels} one checks that $\psi$ is well-defined. It is clearly surjective. Then using the relations of the Weyl algebra one checks that the inverse assignments also define a surjective homomorphism.
\end{proof}

Hence, we can identify $ G_q/\langle \gamma-q^\ell\rangle$ with a countably infinite rank Weyl algebra.  
In \cite{FGM} the authors classified all simple weight (when all $\partial_{ik}X_{ik}$ are simultaneously diagonalizable)
modules over countably infinite rank Weyl algebras. Through the isomorphism in Proposition \ref{prp:qheisenberg_iso} we obtain an ample supply of irreducible weight modules over the quantum Heisenberg algebra $G_q$.

\subsection{$\varphi$-imaginary Verma modules for the quantum Heisenberg algebra}
We construct a particular class of modules over the quantum Heisenberg algebra $G_q$ quantizing the so-called $\varphi$-imaginary Verma modules for classical Heisenberg algebras considered in \cite{BBFK}.
Denote by $\HH_q$ the associative algebra over $\mathbb{C}(q^{1/2})$ generated by $a_i,i\in \mathbb{Z}_{\neq 0}, \gamma^{\pm 1/2}$ with $\gamma^{\pm 1/2}$ central and the following relations:
\begin{equation}\label{eq:untwisted_qheisenberg_rels}
a_ia_j-a_ja_i=\delta_{i+j,0}\frac{[2i]_q}{i}\frac{\gamma^i-\gamma^{-i}}{q-q^{-1}}, i\in \mathbb{Z}_{\neq 0}.
\end{equation}
$\HH_q$ is $\mathbb{Z}$-graded with $\deg(a_i)=i$ and $\deg(\gamma^{\pm 1/2})=0$.
Due to \eqref{eq:qheisenberg_newrels} we have an isomorphism between $G_q$ and the tensor product of $n$ copies of $\HH_q$. Therefore, we can use representations of $\HH_q$ to construct modules over 
$G_q$. Again $\HH_q$ can be viewed as a certain Weyl algebra by Proposition \ref{prp:qheisenberg_iso}.

Let $\varphi:\mathbb{Z}_{>0}\to \{\pm\}$ and define $\HH_\varphi^{\pm}$ to be the subalgebra of $\HH_q$ generated by $a_{\pm\varphi(i)i},i\in \mathbb{N}.$  Let $B_{\varphi}$ be the subalgebra of $\HH_q$ generated by $\HH_\varphi^+$ and $\gamma^{\pm 1/2}.$  Let $\mathbb{C}(q^{1/2})v_\ell,\ell\in \mathbb{Z}$ be the representation of $B_\varphi$ such that $\gamma\cdot v_\ell=q^\ell v_\ell, \HH_\varphi^+\cdot v_\ell=0.$  Then the $\varphi$-imaginary Verma module for $\HH_q, \ell$ is defined to be:
\begin{equation}
\MM_\varphi^q(\ell)=\HH_q\otimes_{B_{\varphi}} \mathbb{C}(q^{1/2})v_\ell.
\end{equation}
We denote by $\HH$ the infinite Heisenberg Lie algebra generated by $a_i,i\in \mathbb{Z}_{\neq 0}, c$ with $c$ central and the following relations:
$$
[a_i,a_j]=\delta_{i+j,0} c, i,j\in \mathbb{Z}_{\neq 0}.
$$
$\varphi$-Imaginary Verma modules for $\HH$ were studied in \cite{BBFK}.

$\HH_q$ has a natural grading in which $a_i, i\in \mathbb{Z}_{\neq 0}$ is given degree $i$ and $c$ is given degree $0$. We define $(\HH_q)_n$ to be the homogeneous component of $\HH_q$ of degree $n$ and set $(\HH_\varphi^-)_n=(\HH_q)_n\cap \HH_\varphi^-$. We have the following analogous properties of $\varphi$-imaginary Verma modules for quantum Heisenberg algebra, $\HH_q$:
\begin{proposition}\label{prop-heis-equi}\begin{enumerate}
\item $\MM_\varphi^q(\ell)$ is a $\mathbb{Z}$-graded $\HH_q$-module, where
 $$\MM_{\varphi}^q(\ell)=\bigoplus_{n\in \mathbb{Z}}\MM_{\varphi}^q(\ell)_n,$$
and $\MM_{\varphi}^q(\ell)_n=(\HH_\varphi^-)_n\cdot v_{\ell}$. If $\varphi(k)\neq \varphi(l)$ for some $k,l\in \mathbb{N}$, then $\MM_\varphi^q(\ell)_n$ is infinite dimensional for any $n\in \mathbb{Z}$,
\item $\MM_\varphi^q(\ell)$ is irreducible if and only if $\ell\neq 0$.
\end{enumerate}
\end{proposition}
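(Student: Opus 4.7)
The plan is to derive both parts from an explicit polynomial-algebra model of $\MM_\varphi^q(\ell)$. First I would note that any two generators $a_{-\varphi(i)i}, a_{-\varphi(j)j}$ of $\HH_\varphi^-$ commute, since the equation $-\varphi(i)i-\varphi(j)j=0$ has no solution with $i,j\in\mathbb{Z}_{>0}$; hence $\HH_\varphi^-$ (and symmetrically $\HH_\varphi^+$) is a polynomial algebra on these generators. Combined with $[\HH_\varphi^+,\HH_\varphi^-]\subseteq \mathbb{C}(q^{1/2})[\gamma^{\pm 1/2}]$, the multiplication map yields a triangular decomposition $\HH_\varphi^-\otimes \mathbb{C}(q^{1/2})[\gamma^{\pm 1/2}]\otimes \HH_\varphi^+\xrightarrow{\sim}\HH_q$; then a routine PBW/universal-property argument produces a grading-preserving linear isomorphism $\MM_\varphi^q(\ell)\cong \HH_\varphi^-\otimes v_\ell$, from which $\MM_\varphi^q(\ell)_n=(\HH_\varphi^-)_n v_\ell$ is immediate.

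For the infinite-dimensionality in (1), assume without loss of generality $\varphi(1)=+$; by hypothesis some $j>1$ satisfies $\varphi(j)=-$, so $a_{-1}, a_j\in\HH_\varphi^-$ have degrees $-1$ and $j$ respectively. For each $n\in\mathbb{Z}$, the monomials $a_j^p\,a_{-1}^{jp-n}$ with $p\geq \max(0,\lceil n/j\rceil)$ are pairwise distinct in the polynomial algebra $\HH_\varphi^-$, all of degree $n$, and therefore yield infinitely many linearly independent elements in $\MM_\varphi^q(\ell)_n$.

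For part (2), the case $\ell=0$ is immediate: $\gamma$ acts as $1$, so by \eqref{eq:untwisted_qheisenberg_rels} every commutator $[a_i,a_j]$ vanishes on $\MM_\varphi^q(0)$, and together with $a_{\varphi(i)i}v_\ell=0$ this forces each $a_{\varphi(i)i}$ to act as $0$ on the whole module; hence $\HH_\varphi^-\cdot a_{-\varphi(1)1}v_\ell$ is a proper nonzero submodule. When $\ell\neq 0$ I would compute that $[a_{\varphi(i)i}, a_{-\varphi(j)j}]$ acts on $v_\ell$ as the scalar $\delta_{ij}\,C_i:=\delta_{ij}\,\varphi(i)\,\frac{[2i]_q[\ell i]_q}{i}$, which is nonzero in $\mathbb{C}(q^{1/2})$. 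Given any nonzero submodule $W$ and $0\neq w=\sum_{\underline m'}c_{\underline m'}\mathbf{a}^{\underline m'}v_\ell\in W$ (with $\mathbf{a}^{\underline m'}=\prod_i a_{-\varphi(i)i}^{m'_i}$), I would choose $\underline m$ with $c_{\underline m}\neq 0$ of maximal total degree $|\underline m|=\sum_i m_i$ and apply the mutually commuting operator $P=\prod_i a_{\varphi(i)i}^{m_i}$. Using that $[a_{\varphi(i)i},a_{-\varphi(j)j}]=0$ for $i\neq j$, the action of $P$ factors index-by-index and gives $P\cdot \mathbf{a}^{\underline m'}v_\ell=\left(\prod_i \frac{m'_i!}{(m'_i-m_i)!}C_i^{m_i}\right)\mathbf{a}^{\underline m'-\underline m}v_\ell$ when $\underline m'\geq \underline m$ componentwise, and $0$ otherwise. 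Maximality of $|\underline m|$ excludes $\underline m'>\underline m$ from the sum, so only the $\underline m'=\underline m$ term survives, giving a nonzero scalar multiple of $v_\ell\in W$; thus $W=\MM_\varphi^q(\ell)$.

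The main obstacle is the bookkeeping required to run the maximality argument uniformly over all indices at once: while the single-index identity $a_+^k a_-^{k'}v=\tfrac{k'!}{(k'-k)!}C^k a_-^{k'-k}v$ is elementary, its promotion to the operator $P$ relies on carefully exploiting the vanishing of the cross-commutators $[a_{\varphi(i)i},a_{-\varphi(j)j}]$ for $i\neq j$. Apart from this, the argument reduces to standard Fock-module reasoning, consistent with the Weyl-algebra identification in Proposition~\ref{prp:qheisenberg_iso}.
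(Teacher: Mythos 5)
Your argument is correct, but it is a genuinely different route from the paper's. The paper disposes of both statements in two lines: by Proposition~\ref{prp:qheisenberg_iso} it identifies $\HH_q/\langle\gamma-q^{\ell}\rangle$ with $U(\HH)/\langle c-\ell\rangle$, obtains an equivalence between weight $\HH_q$-modules of level $q^\ell$ and $\HH$-modules of level $\ell$, and then simply quotes Propositions 3.2 and 3.3 of \cite{BBFK} for the classical $\varphi$-imaginary Verma modules. You instead re-prove everything inside the quantum algebra: the triangular decomposition $\HH_q\cong \HH_\varphi^-\otimes\mathbb{C}(q^{1/2})[\gamma^{\pm 1/2}]\otimes\HH_\varphi^+$ giving the polynomial model $\MM_\varphi^q(\ell)\cong\HH_\varphi^-v_\ell$, an explicit monomial count for the infinite-dimensionality of each graded piece, the observation that at $\ell=0$ all commutators act trivially (yielding a proper submodule), and a Fock-type annihilation argument with the explicit nonzero scalars $C_i=\varphi(i)[2i]_q[\ell i]_q/i$ for irreducibility when $\ell\neq 0$. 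What your version buys is self-containedness (no appeal to \cite{BBFK} or to the category equivalence, and it exhibits the quantum commutator scalars explicitly, making transparent why $\ell\neq 0$ over $\mathbb{C}(q^{1/2})$ suffices); what the paper's version buys is brevity and automatic consistency with the classical picture that the rest of the paper relies on. Two small repairs you should make: the reduction ``without loss of generality $\varphi(1)=+$'' is not automatic from the hypothesis that $\varphi$ is non-constant --- either justify it via the grading-reversing automorphism $a_i\mapsto a_{-i}$, $\gamma^{\pm 1/2}\mapsto\gamma^{\mp 1/2}$ (which interchanges $\MM_\varphi^q(\ell)$ with $\MM_{-\varphi}^q(-\ell)$ and negates degrees), or run the same two-generator monomial construction with the degree-$\mp 1$ generator $a_{-\varphi(1)1}$ of whichever sign occurs; and the asserted PBW/triangular decomposition of $\HH_q$ adapted to the $\varphi$-partition of the generators, while indeed routine (it follows from the Weyl-algebra identification of Proposition~\ref{prp:qheisenberg_iso} or a direct straightening argument), is the one structural input you use repeatedly and deserves at least a one-line justification.
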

\begin{proof}
By Proposition \ref{prp:qheisenberg_iso} we have $\HH_q/\langle\gamma-q^{\ell}\rangle\cong U(\HH)/\langle c-\ell\rangle$. Therefore, there is an equivalence between the category of weight modules of $\HH_q$ on which $\gamma$ acts as $q^\ell$ and the category of modules of $U(\HH)$ on which $c$ acts as $\ell$. The result then follows from Propositions 3.2 and 3.3 of \cite{BBFK}.
\end{proof}
Of course, the construction of $\varphi$-imaginary Verma modules and their properties are easily extendible to the case of $G_q$.

\section{Generalized Loop Modules}
Here we review a construction of generalized loop modules of the Heisenberg subalgebra of an untwisted loop algebra from \cite{BBFK}, \cite{FK1} and \cite{FK2}. They are defined by inducing from certain parabolic submodules based on a non-standard partition of the roots of $\mathfrak g$.

Consider a Heisenberg Lie subalgebra  $G:= \mathbb Cc \oplus \bigoplus_{n \in \mathbb Z \setminus \{0\}} \G_{n\delta}$   of
the affine algebra $\G$.   Thus,
$[x,y] = (x,y)c$ for all $x \in \G_{m\delta}, y \in \G_{n \delta}$,  where
$(x,y)$ is a  skew-symmetric bilinear form such that $(\G_{m\delta}, \G_{n \delta}) =
0$ if $n+m \neq 0$ and its restriction to $\G_{m \delta} \times \G_{-m \delta}$
is nondegenerate for all $m \neq 0$.
Set $G^{\pm} = \bigoplus_{n \in \mathbb N} \G_{\pm n \delta}$.

We will denote by $\mathcal K$ the category of all $\mathbb Z$-graded $G$-modules with the grading compatible with the above grading of $G$. If $V$ is a $G$-module (respectively $\G$-module) with a scalar action of $c$ 
then this scalar is called the {\em level} of $V$.

Let $\PP=\PP_0\oplus \N$ be a  parabolic subalgebra of $\G$
with the Levi subalgebra $\PP_0=G+\H$. If $\NN$ is a module from $\mathcal K$ then one defines on it a structure of a  $\PP_0$-module 
by choosing any $\lambda\in \H^*$ and setting $hv=\lambda(h)v$ for any $h\in \H$ and any $v\in \NN$. Moreover,
$\NN$ can be viewed as a $\PP$-module with a trivial action of
$\N$.

Following \cite{BBFK} and \cite{FK1}, \cite{FK2} consider the induced $\G$-module $${\rm ind}_{\lambda}(\PP, \G; \NN)=U(\G)\otimes_{U(\PP)} \NN.$$ Hence we obtain a functor
 $${\rm ind}_{\lambda}(\PP, \G): \NN\longmapsto {\rm ind}_{\lambda}(\PP, \G; \NN)$$
from the category $\mathcal K$ of $G$-modules to the category of weight $\G$-modules. Let $\tilde{\mathcal K}$ be  the full subcategory of $\mathcal K$ consisting of  modules
on which  $c$ acts injectively. 

The following  theorem  shows that the functor ${\rm ind}_{\lambda}(\PP, \G)$ preserves irreducibility when applied to any irreducible $G$-module in $\tilde{\mathcal K}$.  A particular case of this result was established in \cite{BBFK}.  

\begin{theorem}[\cite{BBFK}, \cite{FK1}, \cite{FK2}]\label{induced-irred}
Let $\PP=\PP_0\oplus \N$ be a  parabolic subalgebra of $\G$, where
$\PP_0=G+\H$. Let $\lambda\in \H^*$ such that $\lambda(c)\neq 0$.   Then the functor ${\rm ind}_{\lambda}(\PP, \G)$ preserves the
irreducibles.
\end{theorem}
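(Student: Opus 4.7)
The plan is to identify $\VV := \mathrm{ind}_\lambda(\PP, \G; \NN)$ with $U(\N^-) \otimes_{\mathbb C} \NN$ via the triangular PBW decomposition $U(\G) = U(\N^-)\, U(\PP_0)\, U(\N)$, using that $\N \cdot \NN = 0$. Under this identification the subspace $1 \otimes \NN$ is preserved by $U(\PP_0)$ and is canonically isomorphic to $\NN$ as a $G$-module, while $U(\N^-)$ acts freely on the left tensor factor. Since $\NN$ has $\H$-weights only in $\mathbb C\delta$, the $\mathring Q$-component of the weight of any tensor $u \otimes v$ is determined by $u$ alone, and $\VV$ is a weight module for $\H$.

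To show $\VV$ is irreducible I would take an arbitrary nonzero $\G$-submodule $M \subseteq \VV$ and prove that $M \cap (1 \otimes \NN) \neq 0$. Once this holds, this intersection is a nonzero $G$-stable subspace of the irreducible $G$-module $1 \otimes \NN$, hence equals all of $1 \otimes \NN$; then $M \supseteq U(\N^-)(1 \otimes \NN) = \VV$.

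For the key step, let $d$ be the smallest integer such that $M$ contains a nonzero element of $\N^-$-degree at most $d$, and let $w \in M$ realize this. If $d = 0$ then $w \in 1 \otimes \NN$ and we are done. For $d \geq 1$ I would apply a carefully chosen $e \in \N$ to produce $e \cdot w \in M$ of strictly smaller degree, contradicting minimality. Since $e \cdot v = 0$ on $\NN$, we have $e \cdot (u \otimes v) = [e, u] \otimes v$; expanding $[e, u]$ by Leibniz, the degree-preserving contributions come from commutators $[e, y]$ lying in $\N^-$, while the strictly lowering contributions come from $[e, y] \in \PP_0 = G + \H$. Using the affine bracket
\begin{equation*}
[e_\alpha \otimes t^k,\, f_\alpha \otimes t^{-l}] = h_\alpha \otimes t^{k-l} + \delta_{k,l}\, l\, (e_\alpha, f_\alpha)\, c,
\end{equation*}
one sees that if $u_1$ is a leading PBW monomial in $w$ with rightmost factor $f_\alpha \otimes t^{-l}$, then $e := e_\alpha \otimes t^l \in \N$ gives a $\PP_0$-contribution acting on $v_1$ by the scalar $\lambda(h_\alpha) + l\,(e_\alpha, f_\alpha)\,\lambda(c)$, a nonconstant affine function of $l$; since $\lambda(c) \neq 0$, it is nonzero for all but one value of $l$. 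For that exceptional $l$ one would instead take $k \neq l$ to produce a Heisenberg element $h_\alpha \otimes t^{k-l} \in G$ whose action on $v_1$ is nontrivial because $c$ acts injectively on $\NN$.

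The main obstacle is to rule out cancellations: the degree-$d$ part of $e \cdot w$ must vanish and the degree-$(d-1)$ part must remain nonzero. I would handle this by fixing a lexicographic order on PBW monomials refining total degree and choosing $u_1$ as the lexicographically leading monomial in $w$, so that the coefficient of $u_1' \otimes v_1$ in the degree-$(d-1)$ part of $e \cdot w$ (with $u_1'$ being $u_1$ with its rightmost factor removed) receives a nonzero contribution from $u_1$ alone; no lexicographically smaller $u_i$ can produce $u_1'$ through a Leibniz step. A double induction on $d$ and on the lexicographic position of the leading monomial then yields $d = 0$, and the reduction above completes the proof.
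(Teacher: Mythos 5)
Your overall skeleton is the right one, and it is essentially the strategy of the sources the paper cites for this statement (the paper itself gives no proof of Theorem~\ref{induced-irred}; it imports it from \cite{BBFK}, \cite{FK1}, \cite{FK2}): identify ${\rm ind}_\lambda(\PP,\G;\NN)$ with $U(\G_{-R})\otimes\NN$, show any nonzero submodule meets $1\otimes\NN$, and exploit $\lambda(c)\neq 0$ through the central term of the affine bracket and through properties of Heisenberg modules of nonzero level. But the crucial degree-reduction step, as you have written it, does not go through, and this is exactly where the real work in \cite{BBFK}, \cite{FK2} lies.

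Concretely: (1) applying $e=e_\alpha\otimes t^{l}$ to your minimal-degree element $w$ does not in general produce an element of smaller $\N^-$-degree, because commutators $[e_\alpha\otimes t^{l}, f_\beta\otimes t^{k}]$ with $\beta-\alpha\in\mathring{\Delta}^+$ land back in $\N^-$; so $e\cdot w$ can still have a nonzero degree-$d$ component and minimality of $d$ is not contradicted. Your proposed remedy (induct on the lexicographic position of the leading monomial) is not a valid induction: nothing forces the new degree-$d$ element to have a smaller leading monomial, and the lexicographic order on degree-$d$ monomials in the infinitely many generators $f_\beta\otimes t^{k}$, $k\in\mathbb Z$, is not well-founded. (2) The non-cancellation claim is false as stated: any other monomial of the form $u_1'\cdot(f_\alpha\otimes t^{-l'})$, $l'\neq l$, also contributes to the coefficient of $u_1'$ a term $u_1'\otimes (h_\alpha\otimes t^{\,l-l'})v_i$, and since all these vectors lie in the same (typically infinite-dimensional) space $\NN$ with no independence guaranteed, they can cancel the scalar term $\bigl(\lambda(h_\alpha)+l(e_\alpha,f_\alpha)\lambda(c)\bigr)v_1$ coming from $u_1$. (3) Your fallback for the exceptional $l$ is unjustified: injectivity of $c$ on $\NN$ does not imply that an individual element $h_\alpha\otimes t^{m}$ acts nontrivially on a given vector (in Fock-type modules half of these elements annihilate the generator); all one gets for free is that $h_\alpha\otimes t^{m}$ and $h_\alpha\otimes t^{-m}$ cannot both kill a nonzero vector. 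The published proofs circumvent all three issues by choosing the raising operators $x_\alpha\otimes t^{s}$ with $|s|$ large relative to all $t$-exponents occurring in $w$, combining this with an induction on the length of monomials and with lemmas about $\mathbb Z$-graded Heisenberg modules of nonzero level; without an ingredient of this kind your argument has a genuine gap at its central step.
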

In particular, let $\PP=(\H+G)\oplus\G_{R}$ be the parabolic subalgebra of $\G$ with Levi factor $\H+G$, where $\G_{R} = \bigoplus_{\beta \in R} \G_{\beta}$, where $R=\{\alpha+k\delta \mid \alpha\in \mathring{\Delta}^+, k\in \mathbb{Z}\}$.

Let $\VV\in  \tilde{\mathcal K}$ of level $\ell \neq 0$, $\VV = \bigoplus_{k \in \mathbb Z} \VV_k$.  Consider $\lambda\in \H^*$  such that $\lambda(c)=\ell$ and define a $\PP$-module structure on $\VV$ by setting $\G_{R}\VV=0$  and  $hw=(\lambda+ k\delta)(h)w$ for any $w\in \VV_k$ and any $h\in \H$. Then the {\it generalized loop module} is defined as 
\begin{equation}\label{eq:genloop} \MM(\lambda, \VV)=U(\G)\otimes_{U(\PP)}\VV \end{equation}
and we have:
\begin{corollary}\label{cor-irr-gen-loop}
If $\VV$ is an irreducible $G$-module in $\tilde{\mathcal K}$ of level $\lambda(c) \neq 0$, then $\MM(\lambda, \VV)$ is an irreducible $\G$-module.
\end{corollary}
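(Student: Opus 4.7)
The plan is to recognize $\MM(\lambda,\VV)$ as the value of the induction functor ${\rm ind}_{\lambda}(\PP,\G)$ of Theorem \ref{induced-irred} on $\VV$, for the specific parabolic subalgebra $\PP=(\H+G)\oplus\G_R$ with $R=\{\alpha+k\delta\mid \alpha\in\mathring{\Delta}^+,\,k\in\mathbb{Z}\}$. Once this identification is made, irreducibility will follow immediately from the theorem. There are two routine verifications to dispatch.

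First I would confirm that the prescribed $\PP$ really is a parabolic subalgebra of $\G$ with Levi factor $\PP_0=\H+G$ and nilradical $\N=\G_R$. The combinatorial checks are: (i) $\Delta=R\sqcup(-R)\sqcup\Delta^{im}$; (ii) $R$ is closed under addition in $\Delta$; and (iii) $R+\mathbb{Z}\delta\subseteq R$, which ensures $\G_R$ is stable under $\operatorname{ad}(G+\H)$. For (i), the descriptions $\Delta^{re}=\{\alpha+k\delta:\alpha\in\mathring{\Delta},\,k\in\mathbb{Z}\}$ and $\Delta^{im}=\mathbb{Z}_{\neq 0}\delta$ recalled in Section 1 make the partition immediate, since $R=\{\alpha+k\delta:\alpha\in\mathring{\Delta}^+,\,k\in\mathbb{Z}\}$ and $-R=\{\alpha+k\delta:\alpha\in\mathring{\Delta}^-,\,k\in\mathbb{Z}\}$. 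For (ii), given $\alpha+k\delta,\,\beta+\ell\delta\in R$ with $\alpha,\beta\in\mathring{\Delta}^+$, their sum $(\alpha+\beta)+(k+\ell)\delta$ is a root only when $\alpha+\beta\in\mathring{\Delta}$, and then necessarily $\alpha+\beta\in\mathring{\Delta}^+$, so the sum lies in $R$. Item (iii) is trivial. Thus $\PP$ is one of the non-standard parabolics of Jakobsen--Kac and Futorny noted in Section 1.

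Second I would match the module-theoretic data. The construction \eqref{eq:genloop} extends the graded $G$-module $\VV=\bigoplus_k\VV_k$ to a $\PP_0$-module by declaring $h\cdot w=(\lambda+k\delta)(h)\,w$ for $w\in\VV_k$, and then to a $\PP$-module by letting $\N=\G_R$ act by zero; this is exactly the extension procedure described in the paragraph preceding Theorem \ref{induced-irred} (the grading shift by $k\delta$ is the standard way to make the $\H$-action compatible with the $\mathbb{Z}$-grading inherited from $G$). Consequently, as $\G$-modules,
\[
\MM(\lambda,\VV)=U(\G)\otimes_{U(\PP)}\VV \;\cong\; {\rm ind}_{\lambda}(\PP,\G;\VV).
\]

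Finally, since $\VV$ is irreducible, lies in $\tilde{\mathcal K}$ (so $c$ acts injectively, in particular by the nonzero scalar $\lambda(c)$), and $\lambda(c)\neq 0$ by hypothesis, the hypotheses of Theorem \ref{induced-irred} are met and it yields the irreducibility of $\MM(\lambda,\VV)$. The only nontrivial ingredient is the parabolic verification above, and even that is routine; there is no genuine obstacle, which is why the statement is recorded as a corollary.
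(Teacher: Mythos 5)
Your proof is correct and follows the same route the paper intends: the corollary is exactly Theorem \ref{induced-irred} specialized to the parabolic $\PP=(\H+G)\oplus\G_R$, with $\MM(\lambda,\VV)$ being ${\rm ind}_{\lambda}(\PP,\G;\VV)$ by construction. The routine verifications you include (that $\G_R$ is a nilradical and that the $\PP$-module structure on $\VV$ matches the one used in the theorem) are exactly what the paper leaves implicit.
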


Note that $\textrm{supp }\MM(\lambda,\VV)=\bigcup_{\beta\in \mathring{Q}_+}\{\lambda-\beta+k\delta\mid k\in \mathbb
Z\}$. Moreover, $\dim \MM(\lambda, \VV)_{\mu} =\infty$ for any $\mu$ of the form $\mu=\lambda-\beta+k\delta$ where $\beta \neq 0$ and $k\in \mathbb Z$.

\section{Generalized Loop Modules--The Quantum Case}
Now we define generalized loop modules for the quantized algebra.
Let $\mathcal{K}_q$ denote the category of $\mathbb Z$-graded $G_q$-modules. Let $U_q^d(\pm)$ be the subalgebra of $U_q(\mathfrak g)$
generated by the Drinfeld generators $x^{\pm}_{ik}$ ($i\in  I$, $k\in\mathbb Z$) and $G_q^ d$ be the subalgebra of $U_q(\mathfrak g)$
generated by $G_q$ and $U_q^ 0$.  Set $B_q^d$ to be the subalgebra generated by $U_q^d(+)\cup G_q^ d$. Let $\VV$ be a $G_q$-module on which $c$ acts as the scalar $\ell\in \mathbb{Z}$ and $\lambda\in P$ be a weight such that $\lambda(c)=\ell$. Extend the action of $G_q$ on $\VV$ to $B_q^d$ by setting  $x^+_{ik}\cdot \VV=0$, $K_i^{\pm 1}\cdot v = q^{\pm\lambda(h_i)}v$, and $D^{\pm 1}\cdot v = q^{\pm \lambda(d)}v$ for all $i \in I$, $k\in\mathbb Z$, and $v\in \VV$. Define $\MM_q(\lambda,\VV)=U_q(\mathfrak g)\otimes_{B_q^d} \VV$.  Then $\MM_q(\lambda,\VV)$ is called the {\it quantum generalized loop module} associated with $\VV$ and $\lambda$.

We can also define quantum generalized loop modules using the first presentation of $U_q(\mathfrak g)$. Let $U_q^r(\pm)$ be the subalgebra of $U_q(\mathfrak g)$ generated by $\{ X_{\beta_k^\pm} \mid k\in \mathbb{Z}\}$, and $G_q^r$ be the subalgebra generated by $\{E_{k\delta}^{(i)}\mid i\in I, k \in \mathbb{Z}\} \cup \{\gamma^{\pm 1/2}\}$ and $U_q^ 0$. Note that $G_q^r=G_q^ d$. Denote by $B_q^r$  the subalgebra of $U_q(\mathfrak g)$ generated by $U_q^r(+)\cup G_q^r$. Let $\lambda \in P$ and $\VV$ be an irreducible $G_q^r$-module such that $K_i^{\pm 1}\cdot v = q^{\pm\lambda(h_i)}v$ and $D^{\pm 1}\cdot v = q^{\pm \lambda(d)}v$ for $v\in \VV$ and $i \in I \cup \{0\}$. We give $\VV$ the structure of a $B_q^r$-module by setting $X_{\beta_k^+}\VV=0$. We have:

\begin{proposition}
$\MM_q(\lambda, \VV)$ is isomorphic to  $U_q(\mathfrak g)\otimes_{B_q^r} \VV$ as $U_q$-modules.
\end{proposition}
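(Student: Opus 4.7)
The plan is to prove the isomorphism in two stages: first show that $B_q^d=B_q^r$ as subalgebras of $U_q(\mathfrak g)$, and then verify that the $B_q^d$- and $B_q^r$-module structures placed on $\VV$ coincide. Once both are established, the identity map on $U_q(\mathfrak g)$ descends to the desired isomorphism of the induced modules by the universal property of induction.

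For the algebra equality, both subalgebras contain $U_q^0$, and the imaginary-root halves are identified via $E_{k\delta}^{(i)}=\gamma^{-|k|/2}h_{ik}$ (recalled in the excerpt), giving $G_q^d=G_q^r$ on the nose. It remains to compare the ``positive'' halves. The remark preceding Theorem~\ref{thm-basis} says that each Drinfeld generator $x^{\pm}_{i,k}$ ($i\in I$, $k\in\Z$) equals a $U_q^0$-multiple of the real root vector $E_{\pm\alpha_i+k\delta}$; by the definition of $X_{\beta_r^+}$ the latter appears (possibly up to a $K_{\beta_r}$-twist) as one of the $X_{\beta_r^+}$. Hence $x_{i,k}^+\in B_q^r$, and every $X_{\beta_r^+}$ attached to a simple-type real root $\pm\alpha_i+m\delta$ ($i\in I$) lies in $B_q^d$. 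For $X_{\beta_r^+}$ attached to a non-simple positive real root in $S$, Beck's iterated-Lusztig-automorphism construction expresses it as a nested $q$-bracket of simple-type real root vectors, each already in $U_q^0\cdot U_q^d(+)$, so $X_{\beta_r^+}\in B_q^d$. This establishes $B_q^d=B_q^r$.

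For the matching module structures, the actions of $U_q^0$ (by the character $\lambda$) and of $G_q$ on $\VV$ agree by construction in the two presentations. The only remaining content is that the conditions $x_{i,k}^+\cdot\VV=0$ and $X_{\beta_k^+}\cdot\VV=0$ are equivalent. One direction is immediate since each $x_{i,k}^+$ is a $U_q^0$-scalar multiple of some $X_{\beta_j^+}$ and $U_q^0$ acts invertibly on $\VV$. For the converse, the Drinfeld relations give $B_q^d$ a triangular-type decomposition $U_q^d(+)\cdot G_q\cdot U_q^0$, so one may expand $X_{\beta_r^+}$ as a sum of ordered monomials $u^+ g k$ with $u^+\in U_q^d(+)$, $g\in G_q$, $k\in U_q^0$. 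Since $X_{\beta_r^+}$ has nonzero real-root weight $\beta_r^+\in\Delta^{re}_+$, every summand must have $u^+$ of positive degree in the $x_{j,\ell}^+$'s (scalars alone contribute only zero weight). Applying to $v\in\VV$: $k$ acts by a scalar, $g$ preserves $\VV$, and then $u^+$ annihilates anything in $\VV$ because its rightmost factor is some $x_{j,\ell}^+$. Thus $X_{\beta_r^+}v=0$, and the module structures agree, yielding the isomorphism. The main technical obstacle is the careful translation between Beck's real root vectors $X_{\beta_r^+}$ and the Drinfeld generators $x_{i,k}^\pm$ for non-simple real roots, together with the attendant $U_q^0$-rescalings that enter at each step; this is implicit in \cite{B} and \cite{CFM} but must be spelled out in order to secure the algebra identification cleanly.
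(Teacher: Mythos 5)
Your overall plan (identify the two inducing subalgebras and the two module structures so that the induced modules coincide) is sensible, and the second half of your argument is fine as written: the spanning $B_q^d=U_q^d(+)\,G_q^d$ obtained by pushing $h_{ik}$, $K_i$, $D$, $\gamma^{\pm 1/2}$ past the $x^+_{j,l}$, the $\mathring{Q}$-weight argument forcing each term to carry a nonempty $x^+$-monomial, and the consequent annihilation $X_{\beta_r^+}\cdot\VV=0$ are all correct. The paper itself only says the proof is ``similar to Corollary 4.1.3 of \cite{CFM}'' (the case $\VV=\mathbb{C}v$), which is a universal-property argument of the same general flavour, so deferring technical facts to \cite{B} and \cite{CFM} is not in itself objectionable.

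The genuine gap is the inclusion $U_q^r(+)\subseteq B_q^d$, i.e.\ that $X_{\beta_r^+}$ attached to a non-simple real root of $S$ lies in the subalgebra generated by the $x^+_{j,l}$ and $G_q^d$. Your justification --- that ``Beck's iterated-Lusztig-automorphism construction expresses it as a nested $q$-bracket of simple-type real root vectors'' --- is not what Beck's construction provides: the vectors are defined by braid operators $T_w$ applied to Chevalley generators, and for $r\geq 1$ the element $X_{\beta_r^+}=-F_{\beta_r}K_{\beta_r}$ lives in $U_q^-U_q^0$ for the standard triangular decomposition, so rewriting it in terms of the Drinfeld raising operators is precisely the nontrivial content of Beck's triangular decomposition in the Drinfeld presentation (the machinery behind Theorem~\ref{thm-basis}), not an implicit remark to be spelled out in passing. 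Note also that your route demands more than is needed. For the isomorphism it suffices to know that each $X_{\beta_r^+}$ lies in the left ideal of $U_q(\G)$ generated by the $x^+_{j,l}$; this weaker fact follows from Beck's decomposition $U_q=U^{(-)}U^{(0)}U^{(+)}$, with $U^{(\pm)}$ generated by the $x^{\pm}_{j,l}$ and $U^{(0)}$ by the $h_{j,l}$ and $U_q^0$, by the same weight argument you already use, since every weight component of $U^{(-)}U^{(0)}$ has non-positive $\mathring{Q}$-part while $X_{\beta_r^+}$ has $\mathring{Q}$-part in $\mathring{\Delta}^+$. With that, $X_{\beta_r^+}(1\otimes\VV)=0$ in $\MM_q(\lambda,\VV)$ and $x^+_{ik}(1\otimes\VV)=0$ in $U_q(\G)\otimes_{B_q^r}\VV$, so the universal property yields two $U_q$-maps fixing $1\otimes\VV$ which are mutually inverse because both modules are generated by $1\otimes\VV$; this is the shape of the argument the paper cites, and it bypasses proving $B_q^d=B_q^r$ altogether.
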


\begin{proof}
The proof is similar to the proof of Corollary 4.1.3 in \cite{CFM}, which concerns the case where $\VV=\mathbb{C}v$.
\end{proof}

Applying Theorem \ref{thm-basis} we immediately obtain:

\begin{theorem}\label{LoopSpan}
Suppose $\VV$ is an irreducible module in $\mathcal{K}_q$.  As a vector space over $\mathbb{C}(q^{1/2})$, $\MM_q(\lambda,\VV)$ is isomorphic to $\bigoplus_i U_q(-)\otimes v_i$ where $\{v_i\}$ is a basis of $\VV$, hence is free as a $U_q(-)$-module.
\end{theorem}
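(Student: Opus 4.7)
The plan is to deduce the statement directly from the PBW-type basis provided by Theorem \ref{thm-basis} together with the definition of $\MM_q(\lambda,\VV) = U_q(\mathfrak g)\otimes_{B_q^r}\VV$ (using the second presentation, which is legitimate by the preceding Proposition). The key structural observation is that the PBW basis of $U_q(\mathfrak g)$ factors as a PBW basis of $U_q(-)=U_q^r(-)$ times a PBW basis of $B_q^r$.

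First I would verify that every monomial of the form $F^{\text{im}}(\ul)\,K_\alpha D^r\gamma^{s/2}\,E^{\text{im}}(\ul')\,X^+(\um')$ appearing in Theorem \ref{thm-basis} lies in $B_q^r$. This is immediate from the definitions: $X^+(\um')\in U_q^r(+)$; the imaginary factors $F^{\text{im}}(\ul)$ and $E^{\text{im}}(\ul')$ are built from the imaginary root vectors $E_{k\delta}^{(i)}$, which belong to $G_q^r$; and $K_\alpha D^r\gamma^{s/2}\in U_q^0\subseteq G_q^r$. Thus the span of these monomials is contained in $B_q^r$, and by the defining generating set, it equals $B_q^r$, giving a PBW basis of $B_q^r$.

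Next I would observe that the monomials $X^-(\um)$ form a PBW basis of $U_q(-)$, while Theorem \ref{thm-basis} asserts that their products with the $B_q^r$-basis described above form a basis of $U_q(\mathfrak g)$. Equivalently, the multiplication map
\begin{equation*}
U_q(-)\otimes_{\mathbb C(q^{1/2})} B_q^r \longrightarrow U_q(\mathfrak g),\qquad u\otimes b\mapsto ub,
\end{equation*}
is an isomorphism of $\bigl(U_q(-),B_q^r\bigr)$-bimodules. Tensoring this on the right with $\VV$ over $B_q^r$ then yields
\begin{equation*}
\MM_q(\lambda,\VV)=U_q(\mathfrak g)\otimes_{B_q^r}\VV \;\cong\; \bigl(U_q(-)\otimes_{\mathbb C(q^{1/2})} B_q^r\bigr)\otimes_{B_q^r}\VV \;\cong\; U_q(-)\otimes_{\mathbb C(q^{1/2})}\VV,
\end{equation*}
and picking a $\mathbb C(q^{1/2})$-basis $\{v_i\}$ of $\VV$ gives the asserted decomposition $\bigoplus_i U_q(-)\otimes v_i$, which exhibits $\MM_q(\lambda,\VV)$ as a free $U_q(-)$-module.

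The irreducibility of $\VV$ plays no role in this freeness statement; it is stated as a hypothesis only for consistency with its usage elsewhere. The only real subtlety is the bookkeeping needed to verify that the PBW basis of Theorem \ref{thm-basis} truly factors as the product of a basis of $U_q(-)$ on the left and a basis of $B_q^r$ on the right in the sense required above. Since Theorem \ref{thm-basis} already provides the combinatorial splitting of the basis monomials into the two desired groups, this factorization is essentially read off from the statement, and no additional computation is needed.
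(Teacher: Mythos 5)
Your proposal is correct and takes essentially the same route as the paper, which deduces the theorem directly from the PBW basis of Theorem \ref{thm-basis}; you simply spell out the factorization of that basis into monomials $X^-(\um)$ times monomials lying in $B_q^r$, and then tensor with $\VV$ over $B_q^r$. The one step you gloss (that the right-hand monomials not only lie in but actually span $B_q^r$, which requires the commutation relations to see that their span is closed under multiplication) is likewise left implicit in the paper, so no substantive difference remains.
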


Denote by $G_{\mathbb{A}}^d$ the ${\mathbb{A}}$-subalgebra of $U_\mathbb{A}(\mathfrak{g })$ generated by 
$$ h_{is}, \ K_i^{\pm 1}, \ \gamma^{\pm 1/2}, D^{\pm 1}, \Lnum{K_i}{s}{r},\Lnum{D}{s}{r},\Lnum{\gamma}{s}{1}_i, \Lnum{\gamma\psi_i}{k,l}{1}
$$
for $i \in I,r,s\in\mathbb Z, s\neq 0$. Using Proposition \ref{prop-heis-equi} one constructs a large supply of  irreducible $G_{\mathbb{A}}$-modules.	
We have $G_{\mathbb{A}}^d/\langle q^{1/2}-1\rangle\simeq U(G+\mathfrak{h})$ and $G_{\mathbb{A}}^d\otimes_\mathbb{A} \mathbb{C}(q)\simeq G_q^d$.

Let $\VV$  be  an irreducible $G_q$-module in $\mathcal{K}_q$. Extend it to a $G_q^d$-module through some fixed $\lambda\in P$. Suppose there exists a basis 
$\{v_j\}$, $j\in J$ of $\VV$ such that $V_{\mathbb{A}}=\sum_j \mathbb{A}v_j$ is a $G_{\mathbb{A}}^d$-submodule such that its classical limit $V_{\mathbb{A}}/(q-1)V_{\mathbb{A}}$ is an irreducible $G+\mathfrak{h}$-module in the category $\mathcal{K}$. We will call such module $\mathbb{A}$-{\em admissible}.
Again using Proposition \ref{prop-heis-equi}   $\mathbb{A}$-admissible irreducible $G_q$-modules can be constructed from irreducible modules of a certain Weyl algebra.

Now fix  an irreducible module $\VV\in \mathcal{K}_q$ which is $\mathbb{A}$-admissible with respect to a suitable basis $\{v_j\}$, $j\in J$. We define the $\mathbb{A}$-form of $\MM_q(\lambda,\VV)$ to be the $U_\mathbb{A}(\mathfrak{g})$-submodule $\MM^{\mathbb{A}}(\lambda,\VV)=\sum_j U_{\mathbb{A}}(\mathfrak{g})\otimes_{B^q} v_j$.  We immediately have the following:
\begin{lemma}\label{LoopAForm}
As a module over $\mathbb{A}$, $\MM^{\mathbb{A}}(\lambda,\VV)$ is spanned by $U^-_{\mathbb{A}}\otimes v_j$, and is free as a $U_\mathbb{A}^-$-module.
\end{lemma}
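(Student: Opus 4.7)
The strategy is to apply the triangular decomposition $U_\mathbb{A}=U_\mathbb{A}^-U_\mathbb{A}^0U_\mathbb{A}^+$ from Corollary~\ref{Arelnscor} and push the middle and positive factors across the tensor. First I would verify the inclusions $U_\mathbb{A}^+\subset B_q^d$ and $U_\mathbb{A}^0\subset G_\mathbb{A}^d\subset B_q^d$ by a generator-by-generator check; the only slightly delicate point is that $\Lnum{\gamma\psi_i}{k,-k}{1}$ reduces via $\psi_{i,0}=K_i$ and $\phi_{i,0}=K_i^{-1}$ to an element of $U_q^0$, which already sits inside $G_q^d$. Now, given $u\in U_\mathbb{A}$, write $u=\sum_i u_i^-u_i^0u_i^+$ by Corollary~\ref{Arelnscor} and move $u_i^0u_i^+\in B_q^d$ through the tensor to obtain
\[
u\otimes v_j=\sum_i u_i^-\otimes(u_i^0u_i^+\cdot v_j).
\]
The identity $x_{ik}^+\cdot v_j=0$ shows that only the scalar part of each $u_i^+$ contributes, after which $u_i^0\in G_\mathbb{A}^d$ preserves $V_\mathbb{A}=\sum_k\mathbb{A}v_k$ by $\mathbb{A}$-admissibility of $\VV$; hence $u_i^0u_i^+\cdot v_j\in V_\mathbb{A}$ and $u\otimes v_j\in\sum_k U_\mathbb{A}^-\otimes v_k$, which proves the spanning claim.

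For freeness, I would invoke Theorem~\ref{LoopSpan}, which identifies $\MM_q(\lambda,\VV)$ with the free $U_q^-$-module $\bigoplus_j U_q^-\otimes v_j$. Because $U_\mathbb{A}^-\hookrightarrow U_q^-$ and the spanning claim already lands inside this direct sum, the sum $\sum_j U_\mathbb{A}^-\otimes v_j$ is automatically direct, and the rank-one isomorphism $U_q^-\otimes v_j\cong U_q^-$ restricts to an isomorphism $U_\mathbb{A}^-\otimes v_j\cong U_\mathbb{A}^-$ of $U_\mathbb{A}^-$-modules. Hence $\MM^\mathbb{A}(\lambda,\VV)=\bigoplus_j U_\mathbb{A}^-\otimes v_j$ is free over $U_\mathbb{A}^-$ on the family $\{1\otimes v_j\}$. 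The only real obstacle is the bookkeeping in the first step—placing every generator of $U_\mathbb{A}^0$ inside $G_q^d$ so that both steps of ``move across the tensor, then act on $v_j$'' are legitimate; once this is settled, the argument is a direct application of the triangular decomposition together with Theorem~\ref{LoopSpan}.
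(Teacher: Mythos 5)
Your argument is correct and is essentially the paper's intended one: the paper states the lemma as immediate from the triangular decomposition $U_\mathbb{A}=U_\mathbb{A}^-U_\mathbb{A}^0U_\mathbb{A}^+$ of Corollary~\ref{Arelnscor}, with $U_\mathbb{A}^+$ annihilating and $U_\mathbb{A}^0\subset G_\mathbb{A}^d$ preserving $V_\mathbb{A}$ by $\mathbb{A}$-admissibility, and freeness coming from the PBW basis underlying Theorem~\ref{LoopSpan}. Your write-up just makes these "immediate" steps explicit, so there is nothing to add.
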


Consider the map $\xi:\mathbb{C}(q^{1/2})\otimes_\mathbb{A} \MM^{\mathbb{A}}(\lambda,\VV)\to \MM_q(\lambda,\VV)$ given by $\xi(f \otimes v)=fv,f\in \mathbb{C}(q^{1/2}), v\in \MM^\mathbb{A}(\lambda,\VV).$   This is clearly surjective, and has inverse $\zeta:\MM_q(\lambda,\VV)\to\mathbb{C}(q^{1/2})\otimes_\mathbb{A} \MM^{\mathbb{A}}(\lambda,\VV)$ given by $\zeta(X^-(\um)v_j)=1\otimes X^-(\um) v_j,$ where $\{X^-(\um)v_j\}$ is an $\mathbb{A}$-basis of $\MM_q(\lambda,\VV)$.  Therefore, we have the following: 
\begin{proposition}\label{LoopIso}
For any $\lambda\in P$ and any $\mathbb{A}$-{\em admissible} $\VV\in \mathcal{K}_q$,  $\mathbb{C}(q^{1/2})\otimes_\mathbb{A} \MM^{\mathbb{A}}(\lambda,\VV)\cong \MM_q(\lambda,\VV)$ as $\mathbb{C}(q^{1/2})$ vector spaces.
\end{proposition}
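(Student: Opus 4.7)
The plan is to verify that the maps $\xi$ and $\zeta$ described just before the statement are mutually inverse $\mathbb{C}(q^{1/2})$-linear bijections. The key is to produce a PBW-type basis which is simultaneously an $\mathbb{A}$-basis of $\MM^{\mathbb{A}}(\lambda,\VV)$ and a $\mathbb{C}(q^{1/2})$-basis of $\MM_q(\lambda,\VV)$; both $\xi$ and $\zeta$ are then essentially the identity on this common basis, up to the tensor factor $1\in\mathbb{C}(q^{1/2})$.

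First I would check that $\xi$ is well-defined and surjective. Because $\MM^{\mathbb{A}}(\lambda,\VV)$ sits inside the $\mathbb{C}(q^{1/2})$-vector space $\MM_q(\lambda,\VV)$ as an $\mathbb{A}$-submodule, the multiplication map $(f,v)\mapsto fv$ is $\mathbb{A}$-balanced and therefore descends to a $\mathbb{C}(q^{1/2})$-linear map $\xi$ on the tensor product. Surjectivity is immediate from Theorem \ref{LoopSpan}: every element of $\MM_q(\lambda,\VV)$ is a finite $\mathbb{C}(q^{1/2})$-combination of vectors $X^-(\um)v_j$, each of which lies in the image of $\xi$. Next I would produce the common basis: by Lemma \ref{LoopAForm}, $\MM^{\mathbb{A}}(\lambda,\VV)$ is free over $U_{\mathbb{A}}^-$ on $\{v_j\}_{j\in J}$; combined with the fact that $\{X^-(\um)\}$ is an $\mathbb{A}$-basis of $U^-_{\mathbb{A}}$ (the $\mathbb{A}$-form of Theorem \ref{thm-basis}, compatible with Corollary \ref{Arelnscor}), this shows that $\{X^-(\um)\otimes v_j\}$ is an $\mathbb{A}$-basis of $\MM^{\mathbb{A}}(\lambda,\VV)$. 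Tensoring with $\mathbb{C}(q^{1/2})$ over $\mathbb{A}$ preserves this freeness, while on the other side Theorem \ref{LoopSpan} yields the vectors $X^-(\um)v_j$ as a $\mathbb{C}(q^{1/2})$-basis of $\MM_q(\lambda,\VV)$. I would then define $\zeta$ on this latter basis by the stated formula and extend $\mathbb{C}(q^{1/2})$-linearly.

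Finally I would verify $\xi\circ\zeta=\mathrm{id}$ and $\zeta\circ\xi=\mathrm{id}$ by direct evaluation on the common basis, which is immediate from the definitions, and $\mathbb{C}(q^{1/2})$-linearity propagates these identities to arbitrary elements. The main potential obstacle is really one of bookkeeping: one must know that no extra $\mathbb{A}$-linear relations arise in passing from the abstract $\mathbb{A}$-module $U_{\mathbb{A}}^-\otimes_{\mathbb{A}}\VV_{\mathbb{A}}$ to the submodule $\MM^{\mathbb{A}}(\lambda,\VV)\subset \MM_q(\lambda,\VV)$, equivalently that $\MM^{\mathbb{A}}(\lambda,\VV)$ really does have the expected $\mathbb{A}$-basis. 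This is precisely the freeness asserted in Lemma \ref{LoopAForm}, which itself rests on the triangular decomposition $U_{\mathbb{A}}=U_{\mathbb{A}}^-U_{\mathbb{A}}^0U_{\mathbb{A}}^+$ from Corollary \ref{Arelnscor} together with the PBW basis of Theorem \ref{thm-basis}; once this is in hand, the construction of $\xi$ and $\zeta$ becomes formal.
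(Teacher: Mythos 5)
Your proposal is correct and follows essentially the same route as the paper: the paper defines the maps $\xi$ and $\zeta$ in the paragraph preceding the statement and concludes they are mutually inverse, relying on the same free-module structure over $U_{\mathbb{A}}^-$ (Lemma \ref{LoopAForm}) and the PBW-type basis $\{X^-(\um)v_j\}$ from Theorem \ref{LoopSpan}. Your write-up simply makes the well-definedness, surjectivity, and common-basis bookkeeping more explicit than the paper does.
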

From Lemmas \ref{LoopSpan} and \ref{LoopAForm} we have:
\begin{proposition}
$\MM^\mathbb{A}(\lambda,\VV)$ is a weight module with the weight decomposition $\MM^\mathbb{A}(\lambda,\VV)=\bigoplus_{\mu \in P}\MM^\mathbb{A}(\lambda,\VV)_\mu$, where $\MM^\mathbb{A}(\lambda,\VV)_\mu=\MM^\mathbb{A}(\lambda,\VV)\cap \MM_q(\lambda,\VV)_\mu$.
\end{proposition}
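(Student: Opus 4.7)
The plan is to exhibit a weight-homogeneous $\mathbb{A}$-basis of $\MM^\mathbb{A}(\lambda,\VV)$ and then argue that the decomposition by weight matches the intersection with the weight spaces of the ambient module $\MM_q(\lambda,\VV)$. To begin, I would choose the $\mathbb{A}$-basis $\{v_j\}_{j\in J}$ of $\VV$ to consist of elements homogeneous with respect to the $\mathbb{Z}$-grading of $\VV\in\mathcal{K}_q$, so that $v_j\in \VV_{k_j}$ for some $k_j\in\mathbb{Z}$. Under the $B_q^d$-action used to define $\MM_q(\lambda,\VV)$, the vector $1\otimes v_j$ is then a weight vector of some weight $\lambda_j\in P$ determined by $\lambda$ and $k_j$, since $K_i$ and $D$ act on $v_j$ by powers of $q$ specified by $\lambda$ and the homogeneity of $v_j$.

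Next, I combine Lemma~\ref{LoopAForm}, which expresses $\MM^\mathbb{A}(\lambda,\VV)$ as a free $U_\mathbb{A}^-$-module on $\{1\otimes v_j\}$, with Beck's PBW $\mathbb{A}$-basis of $U_\mathbb{A}^-$ in the real root vectors $X_{\beta_r^-}$ (Theorem~\ref{thm-basis} restricted to $U_\mathbb{A}^-$). This produces the explicit $\mathbb{A}$-basis
$$
\mathcal{B}=\{\,X^-(\um)\otimes v_j\mid \um:\mathbb{Z}\to\mathbb{N}\text{ finitely supported},\ j\in J\,\}
$$
of $\MM^\mathbb{A}(\lambda,\VV)$. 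Each $X_{\beta_r^-}$ is, by its Lusztig-automorphism construction, a weight vector for $U_q^0$ of weight $-\beta_r^+$, and hence every element of $\mathcal{B}$ is a weight vector in $\MM_q(\lambda,\VV)$ of weight
$$
\mu(\um,j)=\lambda_j-\sum_{r\in\mathbb{Z}}m_r\,\beta_r^+ \;\in\; P.
$$

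For each $\mu\in P$, let $N_\mu\subseteq\MM^\mathbb{A}(\lambda,\VV)$ denote the $\mathbb{A}$-span of the subset of $\mathcal{B}$ of weight $\mu$. Since $\mathcal{B}$ is an $\mathbb{A}$-basis whose elements are weight vectors, I obtain the $\mathbb{A}$-module decomposition $\MM^\mathbb{A}(\lambda,\VV)=\bigoplus_{\mu\in P}N_\mu$, with $N_\mu\subseteq \MM^\mathbb{A}(\lambda,\VV)\cap\MM_q(\lambda,\VV)_\mu$. For the reverse inclusion, take $w\in \MM^\mathbb{A}(\lambda,\VV)\cap \MM_q(\lambda,\VV)_\mu$ and expand $w=\sum_\nu w_\nu$ with $w_\nu\in N_\nu$ using the above direct sum. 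Each $w_\nu$ is a weight vector of weight $\nu$ in $\MM_q(\lambda,\VV)$; by linear independence of distinct weight characters of the commutative subalgebra $U_q^0$ over $\mathbb{C}(q^{1/2})$, $w_\nu=0$ for $\nu\neq\mu$, so $w=w_\mu\in N_\mu$. The one subtle step is verifying that the PBW basis elements are weight vectors with weights in the lattice $P$; this boils down to the action of $K_i$ and $D$ on the Beck root vectors $X_{\beta_r^-}$ recorded in Proposition~\ref{Arelns} together with the standard Lusztig braid relations, and is the main bookkeeping to carry out in full.
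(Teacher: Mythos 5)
Your argument is correct and follows essentially the same route as the paper, which states the proposition as an immediate consequence of Theorem~\ref{LoopSpan} and Lemma~\ref{LoopAForm}: the PBW-type basis $\{X^-(\um)\otimes v_j\}$ consists of weight vectors, so grouping by weight gives the decomposition and the intersection description. Your proposal simply spells out the bookkeeping (homogeneous choice of the $v_j$, weights of the $X_{\beta_r^-}$, and the standard separation-of-weights argument for the reverse inclusion) that the paper leaves implicit.
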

The isomorphism given in Proposition \ref{LoopIso} restricts to the weight spaces to give that
for each $\mu \in P$, $\MM(\lambda,\VV)_\mu$ is a free $\mathbb{A}$-module. Also, when $\dim_{\mathbb{C}(q^{1/2})}(\MM_q(\lambda,\VV)_\mu)< \infty$, we have $\textup{rank}_\mathbb{A}(\MM^\mathbb{A}(\lambda,\VV)_\mu)=\dim_{\mathbb{C}(q^{1/2})}(\MM_q(\lambda,\VV)_\mu).$

We now give the classical limits of $U_q(\mathfrak{g})$ and $\MM_q(\lambda,\VV).$  We set $U'=\mathbb{A}/\mathbb{J}\otimes_\mathbb{A} U_\mathbb{A},$ where $\mathbb{J}$ is the ideal of $\mathbb{A}$ generated by $q^{1/2}-1$. Now, define $\overline{U}=U'/K'$ where $K'$ is the ideal of $U'$ generated by $K_i-1,D-1,$ and $\gamma^{1/2}-1$. We denote by $\overline{u}$ the image of $u\in U'$ under this isomorphism. Thus, $\overline{U}$ is the $q^{1/2}=1$ limit of $U_q(\G)$, hence $\overline{U}\cong U(\mathfrak{g})$, with $\overline{E}_i,\overline{F}_i,\overline{\lbinom{K_i;r}{s}},i\in I\cup\{0\}$ and $\overline{\lbinom{D;r}{s}}$ getting sent to $e_i,f_i,\binom{h_i+r}{s}, i\in I\cup \{0\},$ and $\binom{d+r}{s}$ respectively.  Also, we have that the root generators of $U_\mathbb{A}$, namely $\{x_{ik},h_{il}| i\in I, k\in \mathbb{Z},l\in \mathbb{Z}_{\neq 0}\}$ are sent to the corresponding root generators of $U(\mathfrak{g})$ under this isomorphism (see \cite[Theorem 4.7]{B}).

For $\lambda\in P$ and $V$ an $\mathbb{A}$-admissible irreducible module in $\mathcal{K}_q$ let $\overline{\MM}(\lambda,\VV)=\mathbb{A}/\mathbb{J}\otimes_\mathbb{A} \MM^{\mathbb{A}}(\lambda,\VV).$ After taking this classical limit, $\VV$ becomes an irreducible $G$-module $\overline{\VV}$ in $\mathcal{K}$, since it is assumed to be $\mathbb{A}$-admissible. For $\mu\in P,$ let $\overline{\MM}(\lambda,\VV)_\mu=\mathbb{A}/\mathbb{J}\otimes_\mathbb{A} \MM^{\mathbb{A}}(\lambda,\VV)_\mu.$  Thus, $\overline{\MM}(\lambda,\VV)$ is a $U'$-module and the weight space decomposition of $\MM^\mathbb{A}(\lambda, \VV)$ gives $\overline{\MM}(\lambda,\VV)=\bigoplus_{\mu\in P}\overline{\MM}(\lambda,\VV)_\mu$.  Also $\dim_{\mathbb{C}}(\overline{\MM}(\lambda,\VV)_\mu)=\textup{rank}_{\mathbb{A}}(\MM^{\mathbb{A}}(\lambda,\VV)).$ We have that $K_i,D,$ and $\gamma^{1/2}$ act as the identity on $\overline{\MM}(\lambda,\VV)$, since they act as integral powers of $q^{1/2}$ on weight vectors of $\MM^\mathbb{A}(\lambda,\VV)$, which evaluate to $1$ in the classical limit. Therefore we have that $\overline{\MM}(\lambda,\VV)$ is also a well-defined $\overline{U}$-module, called the classical limit of $\MM^\mathbb{A}(\lambda,\VV).$ Using this identification one may easily verify the following:
\begin{proposition}\label{Classical3}
Let $\VV\in\mathcal{K}_q$ be an irreducible $\mathbb{A}$-admissible $G_q$-module.  Then as a $U(\mathfrak{g})$-module, $\overline{\MM}(\lambda,\VV)$ is a weight module generated by $\overline{\VV}$ such that, for any $\mu \in P, \overline{\MM}(\lambda,\VV)_\mu$ is the $\mu$-weight space of $\overline{\MM}(\lambda,\VV).$ 
\end{proposition}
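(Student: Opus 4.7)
The plan is to use the $\mathbb{A}$-form $\MM^{\mathbb{A}}(\lambda,\VV)$ as a bridge between the quantum and classical pictures, transferring structural information under the specialization functor $\mathbb{A}/\mathbb{J}\otimes_{\mathbb{A}}(-)$. The preceding proposition provides the decomposition $\MM^{\mathbb{A}}(\lambda,\VV)=\bigoplus_{\mu\in P}\MM^{\mathbb{A}}(\lambda,\VV)_\mu$ with each summand free over $\mathbb{A}$, and tensoring with $\mathbb{A}/\mathbb{J}$ (which is exact on free modules) yields $\overline{\MM}(\lambda,\VV)=\bigoplus_{\mu\in P}\overline{\MM}(\lambda,\VV)_\mu$ as a $\mathbb{C}$-vector space.

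To check this is genuinely the $\H$-weight decomposition of $\overline{\MM}(\lambda,\VV)$ viewed as a $U(\G)$-module, I would pass to the Lusztig elements $\Lnum{K_i}{0}{1}$ and $\Lnum{D}{0}{1}$, together with analogous elements built from $\gamma^{\pm 1/2}$, all of which lie in $U_{\mathbb{A}}^0$ and whose classical limits recover the Cartan generators $h_i$, $d$, and $c$ via the identification $\overline{U}\cong U(\G)$ and the assignments $\overline{\Lnum{K_i}{r}{s}}\mapsto\binom{h_i+r}{s}$, $\overline{\Lnum{D}{r}{s}}\mapsto\binom{d+r}{s}$ recalled just above the proposition. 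On a vector $v\in\MM^{\mathbb{A}}(\lambda,\VV)_\mu$ each such element acts as an $\mathbb{A}$-scalar whose classical limit is the correct integer eigenvalue $\mu(h_i)$, $\mu(d)$, or $\mu(c)$, so $\H$ acts on the image $\overline{v}\in\overline{\MM}(\lambda,\VV)_\mu$ by the weight $\mu$. Hence each $\overline{\MM}(\lambda,\VV)_\mu$ sits inside the classical $\mu$-weight space of $\overline{\MM}(\lambda,\VV)$, and equality is forced by the exhausting direct sum. For the generation claim, Lemma \ref{LoopAForm} states that $\MM^{\mathbb{A}}(\lambda,\VV)$ is free over $U_{\mathbb{A}}^-$ with basis $\{1\otimes v_j\}$; specializing and using that $\overline{U_{\mathbb{A}}^-}$ is the enveloping algebra of the negative part of $\G$, we obtain $\overline{\MM}(\lambda,\VV)=U(\G)\cdot\overline{\VV}$, showing that $\overline{\VV}$ generates.

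The main obstacle is the careful bookkeeping around the Cartan action: since $K_i$, $D$, and $\gamma^{\pm 1/2}$ themselves collapse to the identity in the classical limit, the weight information must be extracted through the divided-power Lusztig elements rather than directly from the group-like generators. The admissibility hypothesis on $\VV$ is precisely what guarantees that $\overline{\VV}$ survives as an irreducible module in $\mathcal{K}$ over the classical Heisenberg subalgebra, and this classical structure then propagates through the parabolic induction to give the claimed weight-module structure on $\overline{\MM}(\lambda,\VV)$.
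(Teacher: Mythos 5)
Your argument is correct and follows essentially the route the paper intends: the paper leaves Proposition \ref{Classical3} as an easy verification from the identifications set up in the preceding paragraph (the weight decomposition of $\MM^{\mathbb{A}}(\lambda,\VV)$, the specialization $\overline{U}\cong U(\mathfrak{g})$ sending the Lusztig elements to $\binom{h_i+r}{s}$ and $\binom{d+r}{s}$, and Lemma \ref{LoopAForm} for generation by $\overline{\VV}$), and your write-up simply makes that verification explicit, including the correct observation that the weight must be read off from the divided-power elements since $K_i$, $D$, $\gamma^{1/2}$ specialize to the identity.
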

Finally, we prove:
\begin{proposition}\label{Classical4}
For any $\mathbb{A}$-admissible irreducible $G_q$-module in $\mathcal{K}_q$ and any  $\lambda \in P$, 
$\overline{\MM}(\lambda,\VV)$ is a free $U(\mathfrak{g}_{-R})$-module generated by $\VV$, where $R=\{\alpha+k\delta \mid \mathring{\Delta}, k\in \mathbb{Z}\}$.
\end{proposition}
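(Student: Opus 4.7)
The plan is to exhibit $\overline{\MM}(\lambda,\VV)$ as a quotient of $\MM^{\mathbb{A}}(\lambda,\VV)$ by $(q^{1/2}-1)\MM^{\mathbb{A}}(\lambda,\VV)$ and transfer the freeness statement of Lemma~\ref{LoopAForm} to the classical limit. First, I would recall the PBW-type basis of Theorem~\ref{thm-basis}: using the triangular decomposition of $U_q(\mathfrak g)$ together with the fact that, by construction of the induced module, $X^+(\um')$ annihilates $\VV$ while $F^{\text{im}}(\ul), E^{\text{im}}(\ul'), K_\alpha, D^r, \gamma^{s/2}$ act through the $G_q^d$-module structure on $\VV$, the ordered monomials $\{X^-(\um)\otimes v_j\}$ form a $\mathbb{C}(q^{1/2})$-basis of $\MM_q(\lambda,\VV)$. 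Restricting to the $\mathbb{A}$-form and using Lemma~\ref{LoopAForm}, $\MM^{\mathbb{A}}(\lambda,\VV)$ is free as a $U_{\mathbb{A}}^r(-)$-module on the basis $\{1\otimes v_j\}_{j\in J}$; equivalently, the multiplication map gives an isomorphism of $U_{\mathbb{A}}^r(-)$-modules
\begin{equation*}
\MM^{\mathbb{A}}(\lambda,\VV) \;\cong\; U_{\mathbb{A}}^r(-)\otimes_{\mathbb{A}} V_{\mathbb{A}}.
\end{equation*}

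Next, I would take the classical limit by tensoring over $\mathbb{A}$ with $\mathbb{A}/\mathbb{J}$. Since tensor product commutes with quotients and with direct sums, this yields
\begin{equation*}
\overline{\MM}(\lambda,\VV) \;\cong\; \bigl(U_{\mathbb{A}}^r(-)/\mathbb{J} U_{\mathbb{A}}^r(-)\bigr)\otimes_{\mathbb{C}} \bigl(V_{\mathbb{A}}/\mathbb{J} V_{\mathbb{A}}\bigr).
\end{equation*}
The second factor is $\overline{\VV}$, which is irreducible as a $G+\H$-module by the $\mathbb{A}$-admissibility hypothesis, with basis $\{\overline{v_j}\}_{j\in J}$. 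For the first factor I would invoke the identification used in the paragraph preceding the proposition (based on \cite[Theorem~4.7]{B}): the classical limit of $U_{\mathbb{A}}^r(-)$ is precisely the enveloping algebra of the Lie subalgebra of $\mathfrak g$ spanned by the classical limits of the root vectors $X_{\beta_k^-}$, and by construction these are the root vectors for the roots in $-R$. Hence $U_{\mathbb{A}}^r(-)/\mathbb{J} U_{\mathbb{A}}^r(-)\cong U(\mathfrak g_{-R})$.

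Combining the two identifications gives $\overline{\MM}(\lambda,\VV)\cong U(\mathfrak g_{-R})\otimes_{\mathbb{C}} \overline{\VV}$ as $U(\mathfrak g_{-R})$-modules, which is precisely the assertion that $\overline{\MM}(\lambda,\VV)$ is free over $U(\mathfrak g_{-R})$ and generated by $\overline{\VV}$.

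The step I expect to be the main obstacle is the careful justification that $U_{\mathbb{A}}^r(-)/\mathbb{J} U_{\mathbb{A}}^r(-)\cong U(\mathfrak g_{-R})$: a priori one only knows from Beck's PBW theorem that the ordered monomials in the $X_{\beta_k^-}$ are an $\mathbb{A}$-basis of $U_{\mathbb{A}}^r(-)$, and one must check that their images modulo $\mathbb{J}$ remain linearly independent and that the quantum commutation relations of Proposition~\ref{Arelns}, after setting $q^{1/2}=1$, reproduce exactly the Lie bracket relations defining $U(\mathfrak g_{-R})$. Everything else is formal manipulation of the $\mathbb{A}$-form plus the PBW basis of Theorem~\ref{thm-basis}.
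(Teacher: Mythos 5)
Your plan is correct and follows essentially the same route as the paper's own proof: both rest on Lemma~\ref{LoopAForm} (freeness of $\MM^{\mathbb{A}}(\lambda,\VV)$ over $U^-_{\mathbb{A}}$ on the basis $\{v_j\}$), passing to the classical limit, and the identification via \cite[Theorem 4.7]{B} of the images of the PBW monomials with a basis of $U(\mathfrak{g}_{-R})$, together with $\mathbb{A}$-admissibility giving that $\{\overline{v_j}\}$ is a basis of $\overline{\VV}$. The only difference is presentational: you phrase the limit as a base change $\MM^{\mathbb{A}}\otimes_{\mathbb{A}}\mathbb{A}/\mathbb{J}\cong\bigl(U^r_{\mathbb{A}}(-)/\mathbb{J}U^r_{\mathbb{A}}(-)\bigr)\otimes\overline{\VV}$, whereas the paper argues directly with spanning sets and their images.
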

\begin{proof}
Let $\{v_j\}$, $j\in J$ be a suitable basis of $V$. Then $\MM^\mathbb{A}(\lambda,\VV)$ is the $\mathbb{A}$-span of $u^-\otimes_\mathbb{A} v_{j},$ for $u^-\in U^-_{\mathbb{A}}$, by Lemma \ref{LoopAForm}. Therefore, by the previous proposition, $\overline{\MM}(\lambda,\VV)$ is spanned over $\mathbb{C}$ by the images $\overline{u^-}\otimes_\mathbb{C} \overline{v}_{j}.$ But the images $\overline{u^-}\in \overline{U}$ correspond to basis monomials in $U(\mathfrak{g}_{-R}),$ and the $\overline{v_j}$ are a basis of $\overline{\VV}$ by the admissibility of $\VV$.  Therefore, $\overline{\MM}(\lambda,\VV)$ is a free $U(\mathfrak{g}_{-R})$-module, generated by $\overline{v_j},j\in J$.
\end{proof}
The conclusion is that $\overline{\MM}(\lambda,\VV)\cong\MM(\lambda,\overline{\VV})$. By Propositions \ref{Classical3} and \ref{Classical4} we see that $\MM_q(\lambda,\VV)$ is a  quantum deformation of $\MM(\lambda,\overline{\VV})$.  Hence, applying Corollary \ref{cor-irr-gen-loop} we obtain our main result which allows to construct irreducible $U_q(\mathfrak g)$-modules from irreducible $G_q$-modules.

\begin{theorem}\label{LoopMain}
If $\VV\in\mathcal{K}_q$ is an irreducible $\mathbb{A}$-admissible $G_q$-module of level $\lambda(c)\neq 0$, then $\MM_q(\lambda,\VV)$ is irreducible $U_q(\mathfrak g)$-module.
\end{theorem}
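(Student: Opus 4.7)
The strategy is the standard ``deformation preserves irreducibility'' argument for quantum groups: transfer Corollary~\ref{cor-irr-gen-loop} to the quantum setting via the $\mathbb{A}$-integral form $\MM^{\mathbb{A}}(\lambda,\VV)$ constructed in the previous section.

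Let $N\subseteq \MM_q(\lambda,\VV)$ be a nonzero $U_q(\mathfrak g)$-submodule; our goal is to prove $N=\MM_q(\lambda,\VV)$. Because $\MM_q(\lambda,\VV)$ is a weight module, so is $N$, and I pick a nonzero weight vector $v\in N_\mu$. Since $\MM^{\mathbb{A}}(\lambda,\VV)_\mu$ is a free $\mathbb{A}$-module (by Lemma~\ref{LoopAForm} together with the PBW basis), I can rescale $v$ by an element of $\mathbb{C}(q^{1/2})^{\times}$ so that $v \in \MM^{\mathbb{A}}(\lambda,\VV)$ but $v \notin (q^{1/2}-1)\,\MM^{\mathbb{A}}(\lambda,\VV)$. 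Set $N^{\mathbb{A}} := U_{\mathbb{A}}(\mathfrak g)\cdot v \subseteq \MM^{\mathbb{A}}(\lambda,\VV)$; its image $\overline{N^{\mathbb{A}}}$ in the classical limit $\overline{\MM}(\lambda,\VV)$ is a nonzero $U(\mathfrak g)$-submodule.

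By Propositions~\ref{Classical3} and~\ref{Classical4}, $\overline{\MM}(\lambda,\VV)\cong\MM(\lambda,\overline{\VV})$. Since $\overline{\VV}$ is an irreducible $G$-module of level $\lambda(c)\neq 0$, Corollary~\ref{cor-irr-gen-loop} gives that $\MM(\lambda,\overline{\VV})$ is irreducible over $U(\mathfrak g)$; consequently $\overline{N^{\mathbb{A}}}=\overline{\MM}(\lambda,\VV)$. In other words,
\[
N^{\mathbb{A}} + (q^{1/2}-1)\,\MM^{\mathbb{A}}(\lambda,\VV) \;=\; \MM^{\mathbb{A}}(\lambda,\VV).
\]

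The final step is to promote this classical-limit equality to an honest equality $N=\MM_q(\lambda,\VV)$. Iterating the relation above (weight space by weight space) produces, for every $k\geq 1$, the identity $N^{\mathbb{A}} + (q^{1/2}-1)^k\,\MM^{\mathbb{A}}(\lambda,\VV) = \MM^{\mathbb{A}}(\lambda,\VV)$. Combined with the fact that $N\cap \MM^{\mathbb{A}}(\lambda,\VV)$ is \emph{saturated} in $\MM^{\mathbb{A}}(\lambda,\VV)$ (because $N$ is a $\mathbb{C}(q^{1/2})$-subspace of $\MM_q(\lambda,\VV)$, so divisibility by powers of $(q^{1/2}-1)$ transfers from $\MM_q$ back into the intersection) and with Proposition~\ref{LoopIso}, which identifies $\MM_q(\lambda,\VV)$ with $\mathbb{C}(q^{1/2})\otimes_{\mathbb{A}}\MM^{\mathbb{A}}(\lambda,\VV)$, one concludes that $\MM^{\mathbb{A}}(\lambda,\VV)\subseteq N$, and hence $N = \MM_q(\lambda,\VV)$.

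\emph{Main obstacle:} The third step is the delicate one. The paper emphasizes that the weight spaces of $\MM_q(\lambda,\VV)$ are infinite-dimensional (this is the whole point of the construction), so Nakayama's lemma in its standard finite-generation form is unavailable. The lifting must genuinely combine the $\mathbb{C}(q^{1/2})$-saturation of $N\cap\MM^{\mathbb{A}}$ with the iterated mod-$(q^{1/2}-1)^k$ approximation supplied by the classical limit; the first two steps are essentially bookkeeping, but getting from the specialization at $q^{1/2}=1$ back to an equality over $\mathbb{C}(q^{1/2})$ in the presence of infinite-dimensional weight spaces is where the real care is required.
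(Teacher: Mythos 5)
Your overall strategy coincides with the paper's: the paper's proof of Theorem \ref{LoopMain} takes a proper submodule $\WW$, forms $\WW^{\mathbb{A}}=\WW\cap\MM^{\mathbb{A}}(\lambda,\VV)$, and asserts that its classical limit is a proper submodule of the irreducible module $\MM(\lambda,\overline{\VV})$ (Corollary \ref{cor-irr-gen-loop}, \cite[Theorem 5.6]{BBFK}), which is a contradiction. Your argument is the contrapositive of this, and your normalization of $v$ so that $v\in\MM^{\mathbb{A}}(\lambda,\VV)\setminus(q^{1/2}-1)\MM^{\mathbb{A}}(\lambda,\VV)$ supplies a detail (nonvanishing of the classical limit) that the paper leaves implicit. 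So there is no divergence of route.

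However, the justification you give for the decisive last step does not suffice as stated. From $N^{\mathbb{A}}+(q^{1/2}-1)^k\MM^{\mathbb{A}}(\lambda,\VV)=\MM^{\mathbb{A}}(\lambda,\VV)$ for all $k$, the saturation of $N\cap\MM^{\mathbb{A}}(\lambda,\VV)$, and Proposition \ref{LoopIso} alone one cannot conclude $\MM^{\mathbb{A}}(\lambda,\VV)\subseteq N$ when the relevant weight spaces have infinite rank: take $M$ a free $\mathbb{A}$-module with basis $e_1,e_2,\dots$ and let $N$ be the $\mathbb{C}(q^{1/2})$-span of the vectors $e_i-(q^{1/2}-1)e_{i+1}$; then $N\cap M$ is saturated and $(N\cap M)+(q^{1/2}-1)^kM=M$ for every $k$, yet $e_1\notin N$, so $N\neq \mathbb{C}(q^{1/2})\otimes_{\mathbb{A}}M$. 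Thus passing from fullness modulo $(q^{1/2}-1)$ back to fullness over $\mathbb{C}(q^{1/2})$ genuinely requires using the $U_q(\mathfrak g)$-module structure (or some finiteness), not just the formal properties you list. To be fair, this is exactly the point the paper itself dispatches in one sentence when it asserts, without argument, that the classical limit of $\WW^{\mathbb{A}}$ is proper; but as an argument your step 3 remains open. A way to actually close it is to argue directly inside $\MM_q(\lambda,\VV)$ that any nonzero submodule meets $1\otimes\VV$ (the quantum analogue of the proof of Theorem \ref{induced-irred}, using the Drinfeld relations and $\lambda(c)\neq 0$) and then invoke irreducibility of $\VV$; your ``Main obstacle'' paragraph correctly locates the difficulty, but the proof as written does not overcome it.
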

\begin{proof}
Let $\WW$ be a proper submodule of $\MM_q(\lambda,\VV)$. Then $\WW^\mathbb{A}=\WW \cap \MM^{\mathbb{A}}(\lambda,\VV)$ is a submodule of $\MM^\mathbb{A}(\lambda,\VV)$ and the classical limit of $\WW^\mathbb{A}$ is a proper submodule of $\MM(\lambda,\VV)$. But, $\MM(\lambda,\VV)$ is irreducible by  \cite{BBFK} (Theorem 5.6), \cite{FK2}. Therefore, so is $\MM_q(\lambda,\VV).$
\end{proof}
We believe the restriction on $\VV$ to be $\mathbb{A}$-admissible can be lifted.

\section{$\varphi$-imaginary Verma modules for affine quantum algebras}
In this section we consider a particular class of quantum generalized loop modules. In the $q=1$ case they were introduced in \cite{BBFK}. Let $\varphi: \mathbb N \rightarrow \{+, -\}$ be an arbitrary function. Denote
$S_\varphi=(S\cap \Delta^{\text{re}})\cup \{n\delta|n\in \mathbb{Z}_{>0},\varphi(n)=+\}\cup \{-m\delta|m\in \mathbb{Z}_{>0},\varphi(m)=-\}.$  Let $\lambda\in P,$ $B_{q}^\varphi$ be the subalgebra of $U_q(\mathfrak{g})$ generated by $\{E_\beta | \beta\in S_\varphi\}\cup U_q^0(\mathfrak{g})$.  Let $\mathbb{C}(q^{1/2})v_\lambda$ be the representation of $B_q^\varphi$ such that $K_i\cdot v_\lambda=q^{\lambda(h_i)}v_\lambda,i\in I,D\cdot v_\lambda=q^{\lambda(d)}v_\lambda, U_q(S_{\varphi})\cdot v_\lambda=0.$  Then the $\varphi$-imaginary Verma module for $U_q(\mathfrak{g})$ is defined to be:
\begin{equation}
\MM^\varphi_q(\lambda)=U_q(\mathfrak{g})\otimes_{B^{\varphi}_q} \mathbb{C}(q^{1/2}) v_\lambda.
\end{equation}
We have the following property of $\MM^\varphi_q(\lambda),$ which is an analogue of the $U(\mathfrak{g})$ case in \cite{BBFK}:
\begin{proposition}[Analogous to Proposition 3.4 in \cite{BBFK}]
If $\varphi(k)\neq \varphi(l)$ for some $k,l\in \mathbb{Z}_{>0}$, then $\textup{dim }\MM^\varphi_q(\lambda)_\mu=\infty$ for all weights $\mu$.
\end{proposition}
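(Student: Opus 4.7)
The plan is to produce, for every weight $\mu$ in the support of $\MM_q^\varphi(\lambda)$, infinitely many linearly independent basis vectors of weight $\mu$. First I establish a PBW-type basis of $\MM_q^\varphi(\lambda)$: using Theorem~\ref{thm-basis} with the ordering adapted to the partition $\Delta = S_\varphi \sqcup (-S_\varphi)$, the induced module has a $\mathbb{C}(q^{1/2})$-basis
\[
\{\,X^-(\um)\, F^{\text{im}}_\varphi(\ul)\, E^{\text{im}}_\varphi(\ul')\, v_\lambda\,\},
\]
where $\um$ records exponents on the root vectors for the negative real roots (with respect to $S_\varphi$), $F^{\text{im}}_\varphi(\ul)$ is the product $\prod F_{m\delta}^{(i)\,l_{(m,i)}}$ taken only over $(m,i)$ with $\varphi(m)=+$, and $E^{\text{im}}_\varphi(\ul')$ is the product $\prod E_{n\delta}^{(j)\,l'_{(n,j)}}$ taken only over $(n,j)$ with $\varphi(n)=-$. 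Its weight is
\[
\lambda + \sum_r m_r\,\beta_r^- \;+\; \Big(\sum_{(n,j):\,\varphi(n)=-} n\,l'_{(n,j)} \;-\; \sum_{(m,i):\,\varphi(m)=+} m\,l_{(m,i)}\Big)\delta.
\]

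Next I exploit the hypothesis. Choose $k, l \in \mathbb{Z}_{>0}$ with, without loss of generality, $\varphi(k)=+$ and $\varphi(l)=-$, and fix any $i_0, j_0 \in I$. Given a basis vector $w$ of weight $\mu$, for each $t \in \mathbb{Z}_{\geq 0}$ let $w_t$ be the basis vector obtained from $w$ by the replacements $l_{(k,i_0)} \mapsto l_{(k,i_0)} + tl$ and $l'_{(l,j_0)} \mapsto l'_{(l,j_0)} + tk$. The imaginary part of the weight then shifts by $-(tl)k\,\delta + (tk)l\,\delta = 0$, so each $w_t$ still lies in $\MM_q^\varphi(\lambda)_\mu$. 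As $t$ varies the exponent of $F_{k\delta}^{(i_0)}$ alone strictly increases, so the triples $(\um, \ul, \ul')$ are pairwise distinct and $\{w_t\}_{t \geq 0}$ is a linearly independent family in the $\mu$-weight space by the PBW property. Hence the existence of even one weight vector of weight $\mu$ forces $\dim \MM_q^\varphi(\lambda)_\mu = \infty$.

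The main technical point is therefore the first step: verifying that the set displayed above really is a basis of $\MM_q^\varphi(\lambda)$. This should follow directly from Theorem~\ref{thm-basis} upon choosing an ordering of $\Delta$ in which $S_\varphi$ occupies the upper positions: under this ordering $B_q^\varphi$ sits on the positive side of the triangular decomposition, annihilates $v_\lambda$ (with $U_q^0$ acting diagonally by the character attached to $\lambda$), and the PBW monomials in the root vectors indexed by $-S_\varphi$ supply a free $\mathbb{C}(q^{1/2})$-basis of the induced module. Once this combinatorial decomposition is in place the remainder of the argument is pure bookkeeping in $\mathbb{Z}\delta$, and the case $\varphi(k)=-$, $\varphi(l)=+$ is handled symmetrically by swapping the roles of $E^{\text{im}}_\varphi$ and $F^{\text{im}}_\varphi$.
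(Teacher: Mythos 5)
Your argument is correct in substance, and it is genuinely more self-contained than what the paper does: the paper states this proposition without proof, as an analogue of Proposition 3.4 of \cite{BBFK}, and the route its own machinery suggests is indirect --- identify $\MM^\varphi_q(\lambda)\simeq \MM_q(\lambda,\NN_q)$, use freeness over $U_q(-)$ (Theorem~\ref{LoopSpan}, Lemma~\ref{LoopAForm}) to reduce each weight space to graded components of the quantum Heisenberg module $\NN_q$, and then invoke Proposition~\ref{prop-heis-equi}(1), which itself is imported from the classical case via the Weyl-algebra isomorphism of Proposition~\ref{prp:qheisenberg_iso}. Your proof instead exhibits the infinite families explicitly by PBW bookkeeping: the shift $l_{(k,i_0)}\mapsto l_{(k,i_0)}+tl$, $l'_{(l,j_0)}\mapsto l'_{(l,j_0)}+tk$ preserves the $\delta$-component of the weight, produces pairwise distinct monomials, and so forces every weight space in the support to be infinite dimensional; this also works uniformly in $\lambda$ with no condition on $\lambda(c)$, which is a small advantage over the reduction through $\NN_q$.

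The one place where you claim more than you have is the assertion that the $\varphi$-adapted basis follows \emph{directly} from Theorem~\ref{thm-basis} ``upon choosing an ordering of $\Delta$ in which $S_\varphi$ occupies the upper positions.'' Theorem~\ref{thm-basis} fixes the imaginary split ($F^{\text{im}}$ on the left, $E^{\text{im}}$ on the right), and Beck's ``any ordering'' statement applies only to the real-root subalgebras $U_q^{\pm}$; neither covers moving some positive imaginary root vectors to the negative side according to $\varphi$. What is actually needed is a short straightening argument in the middle factor $G_qU_q^0$: the imaginary generators attached to $-S_\varphi$ commute among themselves (a nonzero bracket would require opposite values of $\varphi$ at the same degree), likewise those attached to $S_\varphi$, and the cross-commutators land in $U_q^0$ and are central, so a filtration argument shows that the $\varphi$-reordered monomials again form a basis of $G_qU_q^0$; one must also check that $B_q^\varphi$ is spanned by the ``upper'' monomials, so that induction really gives freeness over the span of the lower ones. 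This is routine (it is the quantum version of the corresponding step in \cite{BBFK}), but it has to be said --- or bypassed entirely by quoting Theorem~\ref{LoopSpan} together with Proposition~\ref{prop-heis-equi}(1) as above. With that supplied, your counting argument is complete.
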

Consider the $G_q$-submodule of $\MM^\varphi_q(\lambda)$ generated by $v_\lambda$: $\NN_q=G_q v_\lambda$. Then 
\begin{proposition}\label{Heis-quantum-irr}
The module $\NN_q$ is irreducible in $\mathcal{K}_q$  if and only if $\lambda(c)\neq 0$.
Moreover, it is an $\mathbb{A}$-{\em admissible} module.
\end{proposition}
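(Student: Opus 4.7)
The plan is to identify $\NN_q$ with a $\varphi$-imaginary Verma module for $G_q$, to reduce to the $\HH_q$ case via the tensor decomposition $G_q\cong\HH_q^{\otimes n}$, and to invoke Proposition~\ref{prop-heis-equi}. For $\mathbb{A}$-admissibility I would exhibit an explicit PBW-type $\mathbb{A}$-form whose classical limit is the corresponding classical $\varphi$-imaginary Verma module.

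First I would show that $\NN_q\cong G_q\otimes_{B_\varphi^{G_q}}\mathbb{C}(q^{1/2})v_\lambda$, where $B_\varphi^{G_q}\subset G_q$ is the subalgebra generated by $\gamma^{\pm 1/2}$ and the $\varphi$-positive imaginary Drinfeld generators (namely $h_{in}$ for $n>0$ with $\varphi(n)=+$, $i\in I$, and $h_{i,-n}$ for $n>0$ with $\varphi(n)=-$, $i\in I$). The identification $E_{n\delta}^{(i)}=\gamma^{-|n|/2}h_{in}$ for $n\in\mathbb{Z}_{\neq 0}$ translates the defining condition $U_q(S_\varphi)v_\lambda=0$ into exactly these annihilation relations; the imaginary part of the PBW basis in Theorem~\ref{thm-basis} together with the universal property of induction then yields the isomorphism.

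Next I would apply the change of variables \eqref{eq:qheisenberg_newrels} to obtain $G_q/\langle\gamma-q^{\lambda(c)}\rangle\cong\bigotimes_{i=1}^n\HH_q/\langle\gamma-q^{\lambda(c)}\rangle$, where (up to rescaling) the $i$-th tensor factor is generated by $\{h_{ik},h'_{i,-k}:k>0\}$. The $\varphi$-split at each level $n>0$ is preserved, since the transformation $h'_{j,-k}=\sum_m b^k_{mj;q}h_{m,-k}$ only mixes generators of fixed negative level and the matrix $(b^k_{mj;q})$ is invertible; hence the $\varphi$-positive generators still annihilate $v_\lambda$ in the new coordinates. Consequently $\NN_q\cong\bigotimes_{i=1}^n\MM^q_\varphi(\lambda(c))$, and Proposition~\ref{prop-heis-equi}(2) together with the standard fact that a tensor product of irreducibles over commuting tensor-factor algebras is irreducible gives irreducibility of $\NN_q$ if and only if $\lambda(c)\neq 0$.

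For $\mathbb{A}$-admissibility I take the basis of $\NN_q$ consisting of ordered products of $\varphi$-negative Drinfeld generators applied to $v_\lambda$; these commute among themselves by \eqref{eq:qheisenberg_rels}, so such ordered monomials indeed form a basis of $\NN_q$. Let $\NN_q^\mathbb{A}$ be their $\mathbb{A}$-span. Stability under $G_\mathbb{A}^d$ follows from Proposition~\ref{Arelns}: commuting a $\varphi$-positive generator past such a monomial introduces the Lusztig elements $\Lnum{\gamma}{k}{1}_i$ and quantum-integer coefficients $[ka_{ij}]_i/k$, all of which lie in $\mathbb{A}$ by construction. The classical limit $\mathbb{A}/\mathbb{J}\otimes_\mathbb{A}\NN_q^\mathbb{A}$ is then the classical $\varphi$-imaginary Verma module for the Heisenberg algebra $G+\mathfrak{h}$ at level $\lambda(c)\neq 0$, which is irreducible in $\mathcal{K}$ by Propositions~3.2 and 3.3 of \cite{BBFK}. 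The main obstacle is the careful bookkeeping of the $\gamma^{\mp|k|/2}$ factors and the verification that all quantum-integer coefficients arising in the commutation formulas \eqref{axcommutator} and Proposition~\ref{Arelns} lie in $\mathbb{A}$; this ultimately reduces to the observation that $\mathbb{A}$ is defined precisely to contain the denominators appearing in the Drinfeld commutation relations.
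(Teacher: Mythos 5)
Your argument is correct in substance, but it is organized rather differently from the paper's proof, so a comparison is worthwhile. The paper argues directly on $\NN_q$: it observes that $\NN_q$ is a $\varphi$-highest weight module (for each $n$ either $\G_{n\delta}v_\lambda=0$ or $\G_{-n\delta}v_\lambda=0$), invokes the standard singular-vector argument for irreducibility when $\lambda(c)\neq 0$, notes that for $\lambda(c)=0$ the maximal submodule is the complement of $\mathbb{C}(q^{1/2})v_\lambda$, and for $\mathbb{A}$-admissibility simply sets $\NN_{\mathbb{A}}=G^d_{\mathbb{A}}v_\lambda$, observing that its classical limit is a $\varphi$-highest weight module over $G+\H$ and hence irreducible because the universal classical $\varphi$-imaginary Verma module is itself irreducible at nonzero level. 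You instead identify $\NN_q$ with the $G_q$-module induced from the $\varphi$-parabolic, pass through the change of variables and the decomposition of $G_q$ into $n$ copies of $\HH_q$, and quote Proposition~\ref{prop-heis-equi}; for admissibility you exhibit the $\mathbb{A}$-form concretely as the $\mathbb{A}$-span of ordered creation monomials and check stability under $G^d_{\mathbb{A}}$ via Proposition~\ref{Arelns}. Your route buys explicitness (a concrete monomial basis realizing admissibility, which the paper leaves implicit), at the cost of extra bookkeeping with the matrices $(b^k_{ij;q})$ and the rescalings. One caveat: the step ``a tensor product of irreducibles over commuting tensor-factor algebras is irreducible'' is not automatic over the non-algebraically closed field $\mathbb{C}(q^{1/2})$; it requires knowing that the endomorphism ring of each factor is the base field. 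This does hold here, since for $\ell\neq 0$ the space of $\varphi$-singular vectors of $\MM^q_\varphi(\ell)$ is one-dimensional (the same computation that proves its irreducibility), or one can sidestep the issue entirely by running the singular-vector argument directly on $\NN_q$, which is precisely the paper's ``standard argument''. With that remark supplied, your proof is complete and consistent with the paper's.
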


\begin{proof}
Note that $\NN_q$ is a $\varphi$-highest weight module, i.e. for any $n\in\mathbb N$ either $\mathfrak g_{n\delta}v_\lambda=0$ or $\mathfrak g_{-n\delta}v_\lambda=0$ depending on the sign of $\varphi(n)$.
Then a standard argument shows that $\NN_q$ is irreducible when $\lambda(c)\neq 0$. If $\lambda(c)= 0$ then the maximal submodule of $\NN_q$ is the vector space direct sum complement of $\mathbb{C}(q^{1/2}) v_\lambda.$
Choose a $\varphi$-highest weight vector $v\in \NN_q$ and let $\NN_{\mathbb{A}}=G_{\mathbb{A}}^d v$.
Then the classical limit of $\NN_{\mathbb{A}}$ is irreducible because the universal $\phi$-highest weight module is irreducible itself. Therefore $\NN_q$ is $\mathbb{A}$-admissible.
\end{proof}
We have an isomorphism
$$\MM^\varphi_q(\lambda)\simeq \MM_q(\lambda, \NN_q).$$
\begin{theorem}\label{teor-phi}
Let  $\varphi: \mathbb N \rightarrow \{+, -\}$, $\lambda\in P$.  Then  $\MM^\varphi_q(\lambda)$ is irreducible if and only if 
$\lambda(c)\neq 0$.
\end{theorem}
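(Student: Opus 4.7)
The plan is to deduce both directions of the equivalence from the isomorphism $\MM^\varphi_q(\lambda)\simeq\MM_q(\lambda,\NN_q)$ recorded just above the theorem, combined with Theorem~\ref{LoopMain} and the structure of $\NN_q$ established in Proposition~\ref{Heis-quantum-irr}.

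For the ``if'' direction, assume $\lambda(c)\neq 0$. Proposition~\ref{Heis-quantum-irr} provides that $\NN_q$ is an irreducible $\mathbb{A}$-admissible $G_q$-module in $\mathcal{K}_q$ of level $\lambda(c)\neq 0$, so all hypotheses of Theorem~\ref{LoopMain} are satisfied with $\VV=\NN_q$. It follows immediately that $\MM_q(\lambda,\NN_q)$---and hence, via the cited isomorphism, $\MM^\varphi_q(\lambda)$---is irreducible.

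For the converse, assume $\lambda(c)=0$. The proof of Proposition~\ref{Heis-quantum-irr} exhibits a proper maximal $G_q$-submodule $\NN'\subset\NN_q$, namely the direct sum of all weight spaces of $\NN_q$ of weight different from $\lambda$. The first thing I would check is that $\NN'$ is actually a $B^d_q$-submodule: it is a union of weight spaces so $U^0_q$ preserves it, while $U^d_q(+)$ acts by zero on all of $\NN_q$. The latter follows by induction on $G_q$-word length from the seed identity $x^+_{j,m}v_\lambda=0$ (a consequence of $\alpha_j+m\delta\in S\cap\Delta^{\mathrm{re}}\subseteq S_\varphi$) and the Drinfeld commutator \eqref{axcommutator}, which expresses $[x^+_{j,l},h_{i,k}]$ as a scalar multiple of $x^+_{j,k+l}$ times a power of $\gamma^{1/2}$. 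Consequently the submodule
$$
\WW\ :=\ U_q(\mathfrak{g})\otimes_{B^d_q}\NN'
$$
is well-defined, and via the triangular decomposition together with Theorem~\ref{LoopSpan} it is identified with the strict inclusion $U_q^-\otimes_{\mathbb{C}(q^{1/2})}\NN'\subsetneq U_q^-\otimes_{\mathbb{C}(q^{1/2})}\NN_q\cong\MM^\varphi_q(\lambda)$. Note that $\NN'$ is nonzero since, for any $\varphi$, at least one of $h_{i,k}$, $h_{i,-k}$ fails to annihilate $v_\lambda$, so $\NN_q$ strictly contains its $\lambda$-weight line. Hence $\WW$ is a proper nonzero submodule, and $\MM^\varphi_q(\lambda)$ is reducible.

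The main technical step---and the one I expect to require the most care---is verifying that $U^d_q(+)$ annihilates \emph{all of} $\NN_q$ rather than merely the highest weight vector $v_\lambda$. This is what allows $\NN'$ to pass from a $G_q$-submodule to a $B^d_q$-submodule and thus propagate to a proper submodule of the induced module; the remaining steps---triangular decomposition, freeness over $U_q^-$, and the two implications themselves---are formal consequences of results already collected in the paper.
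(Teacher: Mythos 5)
Your ``if'' direction is exactly the paper's argument (Proposition~\ref{Heis-quantum-irr} plus Theorem~\ref{LoopMain} through the isomorphism $\MM^\varphi_q(\lambda)\simeq\MM_q(\lambda,\NN_q)$), so no issue there. The gap is in the converse, in your choice of $\NN'$. You take $\NN'$ to be the sum of all weight spaces of $\NN_q$ of weight different from $\lambda$ and attribute this to the proof of Proposition~\ref{Heis-quantum-irr}; what that proof actually uses is the complement of the one-dimensional space $\mathbb{C}(q^{1/2})v_\lambda$, which is a different (smaller) subspace, because for non-constant $\varphi$ the $\lambda$-weight space of $\NN_q$ is much bigger than $\mathbb{C}(q^{1/2})v_\lambda$ (indeed infinite dimensional, cf.\ Proposition~\ref{prop-heis-equi}). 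Worse, your $\NN'$ is not a $G_q$-submodule in precisely that case: the $\varphi$-lowering generators have degrees of both signs, so they can map nonzero-degree vectors back into degree zero. Concretely, suppose $\varphi(1)=+$ and $\varphi(2)=-$, so that $h_{i,-1}$ and $h_{j,2}$ are lowering generators acting injectively on $\NN_q$ (it is free over the $\varphi$-lowering half). At level $\lambda(c)=0$ the element $\gamma$ acts by $1$, so by \eqref{hs} the $h$'s commute on $\NN_q$; the vector $w=h_{i,-1}h_{j,2}v_\lambda$ has weight $\lambda+\delta$ and lies in your $\NN'$, while $h_{k,-1}w\neq 0$ has weight $\lambda$ and does not. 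Hence your $\NN'$ is not $B^d_q$-stable and the submodule $\WW$ is not defined as you describe; the argument collapses exactly in the case of interest (non-constant $\varphi$).

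The repair is the paper's choice: at level zero take $\NN'$ to be the span of all nonconstant monomials in the $\varphi$-lowering generators applied to $v_\lambda$, i.e.\ the complement of the line $\mathbb{C}(q^{1/2})v_\lambda$. Since $\gamma$ acts as $1$, the raising generators annihilate all of $\NN_q$ and the lowering generators act by multiplication, so this is a proper nonzero graded $G_q$-submodule not containing $v_\lambda$. With this $\NN'$ the remainder of your proof goes through and is in fact a useful elaboration of the paper's terse final sentence (``which in turn generates a nontrivial submodule''): your induction showing that $U^d_q(+)$ kills all of $\NN_q$ via \eqref{axcommutator}, and the properness of $U_q(\G)\NN'=U_q^-\NN'$ via freeness over $U_q^-$, are exactly the details left implicit. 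One minor caveat: Theorem~\ref{LoopSpan} is stated for irreducible $\VV$, whereas here $\NN_q$ is reducible; this is harmless because the freeness statement only uses the PBW basis of Theorem~\ref{thm-basis}, but you should cite that theorem rather than Theorem~\ref{LoopSpan} at that point.
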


\begin{proof}
Indeed, if $\lambda(c)\neq 0$ then $\NN_q$ is irreducible by Proposition~\ref{Heis-quantum-irr}. Applying Theorem \ref{LoopMain} we immediately obtain that $\MM^\varphi_q(\lambda)$ is irreducible. Conversely, suppose $\MM^\varphi_q(\lambda)$ is irreducible but $\lambda(c)= 0$.
Then $\NN_q$ contains a nontrivial submodule by Proposition~\ref{Heis-quantum-irr}, which in turn generates a nontrivial submodule of 
$\MM^\varphi_q(\lambda)$. 
\end{proof}

\section{Acknowledgments}
 The first author  was supported in part by the CNPq grant (301743/2007-0) and by the
Fapesp grant (2010/50347-9). The third author is grateful to the University of S\~{a}o Paulo for hospitality, his supervisor Vyacheslav Futorny, and to the Fapesp for financial support (grant number 2011/12079-5).

\end{document}